\newlist{todolist}{itemize}{2}
\setlist[todolist]{label=$\square$}
\def\R{\mathbb{R}}
\def\Z{\mathbb{Z}}
\def\N{\mathbb{N}}
\def\Rn{\mathbb{R}^n}
\def\P{\mathcal{P}}
\def\one{{\mathbf 1}}
\newcommand{\norm}[1]{\|{#1}\|}
\newcommand{\set}[1]{\left\{#1\right\}}
\newcommand{\brac}[1]{\left(#1\right)}
\newcommand{\abs}[1]{\left|#1\right|}
\newcommand{\dist}{\mathrm{dist}}
\renewcommand{\d}{\partial}
\newcommand{\jet}{\mathscr{J}}
\newcommand{\RD}{\R^D}
\newtheorem{theorem}{Theorem}[section]
\newtheorem*{acknowledgement}{Acknowledgement}
\newtheorem{definition}[theorem]{Definition}
\newtheorem{lemma}[theorem]{Lemma}
\newtheorem{problem}[theorem]{Problem}
\numberwithin{equation}{section}
\newcommand{\V}{{C^m(\Rn,\RD)}}
	\newcommand{\hV}{{\dot{C}^m(\Rn,\RD)}}
	\newcommand{\snorm}[1]{\norm{#1}_\infty}
\begin{document}
	\title{On the Shape Fields Finiteness Principle}

	\author{Fushuai Jiang}
	\address{Department of Mathematics, UC Davis
		One Shields Ave
		Davis, CA 95616}
	\email{fsjiang@math.ucdavis.edu}

	\author{Garving K. Luli}
	\address{Department of Mathematics, UC     Davis
		One Shields Ave
		Davis, CA 95616}
	\email{kluli@math.ucdavis.edu}

	\author{Kevin O'Neill}
	\address{Department of Mathematics, UC     Davis
		One Shields Ave
		Davis, CA 95616}
	\email{oneill@math.ucdavis.edu}
	
	\maketitle
	
\begin{abstract}
In this paper, we improve the finiteness constant for the finiteness principles for $C^m(\R^n,\R^d)$ and $C^{m-1,1}(\R^n,\R^D)$ selection proven in \cite{FIL16} and extend the more general shape fields finiteness principle to the vector-valued case.
\end{abstract}
	
\section{Introduction}

Suppose we are given integers $m \geq 0$, $n \geq 1$, $D \geq 1$. We write $C^{m}\left( \mathbb{R}^{n},\mathbb{R}^{D}\right) $ to denote the space
of all functions $\vec{F}:\mathbb{R}^{n}\rightarrow \mathbb{R}^{D}$ whose
derivatives $\partial^{\alpha }\vec{F}$ (for all $\left\vert \alpha \right\vert
\leq m$) are continuous and bounded on $\mathbb{R}^{n}$, equipped with the norm
	\begin{equation*}
	\norm{\vec{F}}_\V := \max_{\abs{\alpha} \leq m}\sup_{x \in \Rn} \norm{\d^\alpha \vec{F}(x)}_\infty=\max_{\substack{\abs{\alpha} \leq m\\ 1\le j\le D}}\sup_{x \in \Rn} |\d^\alpha F_j(x)|.
	\end{equation*}
	Here and below, we view $ \d^\alpha \vec{F}(x) = (\d^\alpha F_1(x), \cdots, \d^\alpha F_D(x)) $ as a vector in $ \RD $.
	
	We write $ \hV $ to denote the vector space of $ m $-times continuously differentiable $ \RD $-valued functions whose $ m $-th order derivatives are bounded, equipped with the seminorm
	\begin{equation*}
	\norm{\vec{F}}_\hV := \max_{\abs{\alpha} = m}\sup_{x \in \Rn} \norm{\d^\alpha \vec{F}(x)}_\infty.
	\end{equation*}

We write $%
C^{m-1,1}\left( \mathbb{R}^{n},\mathbb{R}^{D}\right) $ to denote the space of
all $\vec{F}:\mathbb{R}^{n}\rightarrow \mathbb{R}^{D}$ whose derivatives $\partial
^{\alpha}\vec{F}$ (for all $\left\vert \alpha \right\vert \leq m-1$) are bounded
and Lipschitz on $\mathbb{R}^{n}$. When $D=1$, we write $C^{m}\left( \mathbb{%
R}^{n}\right) $ and $C^{m-1,1}\left( \mathbb{R}^{n}\right) $ in place of $%
C^{m}\left( \mathbb{R}^{n},\mathbb{R}^{D}\right) $ and $C^{m-1,1}\left(
\mathbb{R}^{n},\mathbb{R}^{D}\right) $.

We write $\vec{\P} $ to denote the vector space $\bigoplus_{j =1}^D\P$, where $\P$ is the space of polynomials on $\Rn$ with degree no greater than $m-1$. Note that $\dim\vec{\P}=D\cdot \binom{n+m-1}{m-1}$.

Quantities $c\left( m,n\right) $, $C\left( m,n\right) $, $k\left(
m,n\right) $, etc., denote constants depending only on $m$, $n$; these
expressions may denote different constants in different occurrences. Similar
conventions apply to constants denoted by $C\left( m,n,D\right)$, $k\left(m,n,
D\right)$, etc.

If $S$ is any finite set, then $\abs{S}$ denotes the number of
elements in $S$.

Let $E \subset \mathbb{R}^n$ be given. Suppose at each $x \in E$, we are given a convex set $K(x) \in \mathbb{R}^D$. A \underline{selection} of $(K(x))_{x \in E}$ is a map $\vec{F}: \mathbb{R}^n \rightarrow \mathbb{R}^D$ such that $\vec{F}(x) \in K(x)$ for all $x \in E$.

We are interested in the following selection problem. 

\begin{problem}\label{problem}
Let $E \subset \mathbb{R}^n$. For each $x \in E$, suppose we are given a convex $K(x)\subset \mathbb{R}^n$. Given a number $M > 0$, how can decide if there exists a selection $\vec{F} \in C^{m-1,1}(\mathbb{R}^n,\mathbb{R}^D)$ or $\vec{F} \in C^m(\mathbb{R}^n,\mathbb{R}^D)$ with $\left\Vert \vec{F}\right\Vert _{C^{m}\left( \mathbb{R}^{n},\mathbb{R}%
^{D}\right) }\leq C^{\#}M$ or $\left\Vert \vec{F}\right\Vert _{C^{m-1,1}\left( \mathbb{R}^{n},%
\mathbb{R}^{D}\right) }\leq C^{\#}M$, where $C^{\#}$ depends only on $m,n,D$?
\end{problem}

In \cite{FIL16}, the authors addressed Problem \ref{problem} by proving the following

\begin{theorem}[Finiteness Principle for Smooth Selection]
\label{th1}For large enough ${k^{\sharp}}=k\left( m,n,D\right) $ and $%
C^{\#}=C\left( m,n,D\right) $, the following hold.

\begin{description}
\item[(A) $C^{m}$ FLAVOR] Let $E\subset \mathbb{R}^{n}$ be finite. For each $%
x\in E$, let $K\left( x\right) \subset \mathbb{R}^{D}$ be convex. Suppose
that for each $S\subset E$ with $\abs{S} \leq {k^{\sharp}}$, there
exists $\vec{F}^{S}\in C^{m}\left( \mathbb{R}^{n},\mathbb{R}^{D}\right) $ with
norm $\left\Vert \vec{F}^{S}\right\Vert _{C^{m}\left( \mathbb{R}^{n},\mathbb{R}%
^{D}\right) }\leq 1$, such that $\vec{F}^{S}\left( x\right) \in K\left( x\right) $
for all $x\in S$. \newline
Then there exists $\vec{F}\in C^{m}\left( \mathbb{R}^{n},\mathbb{R}^{D}\right) $
with norm $\left\Vert \vec{F}\right\Vert _{C^{m}\left( \mathbb{R}^{n},\mathbb{R}%
^{D}\right) }\leq C^{\#}$, such that $\vec{F}\left( x\right) \in K\left( x\right) $
for all $x\in E$.

\item[(B) $C^{m-1,1}$ FLAVOR] Let $E\subset \mathbb{R}^{n}$ be arbitrary.
For each $x\in \mathbb{R}^n$, let $K\left(
x\right) \subset \mathbb{R}^{D}$ be a closed convex set. Suppose that for
each $S\subset E$ with $\abs{S} \leq {k^{\sharp}}$, there
exists $\vec{F}^{S}\in C^{m-1,1}\left( \mathbb{R}^{n},\mathbb{R}^{D}\right) $ with
norm $\left\Vert \vec{F}^{S}\right\Vert _{C^{m-1,1}\left( \mathbb{R}^{n},\mathbb{R}%
^{D}\right) }\leq 1$, such that $\vec{F}^{S}\left( x\right) \in K\left( x\right) $
for all $x\in S$.\newline
Then there exists $\vec{F}\in C^{m-1,1}\left( \mathbb{R}^{n},\mathbb{R}^{D}\right)
$ with norm $\left\Vert \vec{F}\right\Vert _{C^{m-1,1}\left( \mathbb{R}^{n},%
\mathbb{R}^{D}\right) }\leq C^{\#}$, such that $\vec{F}\left( x\right) \in K\left(
x\right) $ for all $x\in \mathbb{R}^{n}$.
\end{description}
\end{theorem}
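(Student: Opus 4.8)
The plan is to derive Theorem~\ref{th1} from a more flexible \emph{finiteness principle for vector-valued shape fields}, to be set up in the body of the paper, of which the present statement is the special case produced by the data $\brac{K(x)}_{x\in E}$. First I would reduce part (B) to the case of \emph{finite} $E$ (a standard weak-$*$/diagonal compactness argument, extracting a limit of selections built on an exhaustion of $E$ by finite sets), and then reduce part (A) to part (B): one runs the whole argument with the modulus of continuity $t$ of $C^{m-1,1}$ replaced by an arbitrary regular modulus $\omega$, and recovers the $C^m$ statement by letting $\omega\to 0$ and invoking Ascoli--Arzel\`a, using that the $C^m$ finiteness hypothesis provides uniformly bounded local selections at every scale. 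Thus the heart of the matter is the statement for finite $E$ in the $C^{m-1,1}$ (equivalently, general-$\omega$) flavor.

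Next I would encode the data as a shape field: to each $x\in E$ assign the convex set of admissible jets $\vec{\Gamma}(x)=\set{\vec P\in\vec{\P}:\vec P(x)\in K(x)}$, with an accompanying bound on $\snorm{\vec P}$-type seminorms, and verify that the resulting family is Whitney $\omega$-convex with constants depending only on $m,n,D$. The hypothesis that every $S\subset E$ with $\abs S\le k^\sharp$ carries a local selection of norm $\le 1$ then translates \emph{verbatim} into the hypothesis of the shape-field finiteness principle; the relevant base-level finiteness number for a single point is governed by a Helly-type argument in $\vec{\P}$, whose dimension is $D\binom{n+m-1}{m-1}$.

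The core is then proved by Fefferman's induction over \emph{monotonic} subsets $\mathcal A$ of the set $\mathcal M$ of multi-indices of order $\le m-1$, equipped with a fixed order relation, descending from $\mathcal M$ down to $\void$. For each $\mathcal A$ one states a ``Main Lemma for $\mathcal A$'': if near a cube $Q^\circ$ the shape field satisfies a labeling condition indexed by $\mathcal A$, then $Q^\circ\cap E$ admits a selection of controlled norm. The base case $\mathcal A=\void$ is immediate. The inductive step combines \emph{(i)} a Calder\'on--Zygmund decomposition of $Q^\circ$ into dyadic subcubes on each of which either only boundedly many points of $E$ are relevant---so the finiteness hypothesis, with a $k^\sharp$ that has grown by a controlled factor, directly supplies a local selection---or a strictly stronger labeling condition holds, corresponding to some $\mathcal A'\supsetneq\mathcal A$, to which the inductive hypothesis applies; and \emph{(ii)} a partition of unity subordinate to the decomposition, used to patch the local selections into one on $Q^\circ\cap E$, whose $C^{m-1,1}$ (or $C^m$) norm is estimated from the good geometry of the CZ cubes (bounded overlap, comparable sizes of neighbouring cubes) together with Whitney $\omega$-convexity, which is precisely what guarantees that convex combinations of admissible jets on overlapping cubes remain admissible up to controlled error.

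The main obstacle---and the place where the improved finiteness constant is won---is the bookkeeping of how $k^\sharp$ and $C^\sharp$ propagate through the induction on $\mathcal A$: each descending step multiplies the number of required test points by a factor controlled by the CZ stopping rule and by $\dim\vec{\P}$, so one must choose the stopping thresholds and the order relation on $\mathcal M$ so as to keep this growth as small as possible, and one must check that the Whitney $\omega$-convexity constants degrade by only a controlled amount at each step. The vector-valued feature enters chiefly through $\dim\vec{\P}=D\binom{n+m-1}{m-1}$ replacing $\binom{n+m-1}{m-1}$ and through the need to keep every convexity argument coordinate-free in $\RD$; it does not change the structure of the induction, but it does inflate the constants, which is why both $C^\sharp$ and $k^\sharp$ depend on $D$.
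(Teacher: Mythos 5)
Your proposal is, in essence, a sketch of the original Fefferman--Israel--Luli proof of Theorem \ref{th1}, not of the treatment in this paper. Here the theorem is quoted from \cite{FIL16}; what the paper actually proves is (i) the vector-valued shape-fields finiteness principle (Theorem \ref{thm:SF FP}), obtained \emph{without} redoing the Calder\'on--Zygmund/label induction, by a gradient trick: the scalar ($D=1$) shape-fields theorem of \cite{FIL16} is applied in $\R^{n+D}$ to the sets $\Gamma((x,0),M)=\set{P\in\P^+:P(x,0)=0,\ \nabla_\xi P(x,0)\in\vec\Gamma(x,M)}$, and the $\RD$-valued selection is recovered as $\nabla_\xi F(\cdot,0)$; and (ii) the improvement of $k^\sharp$ to $2^{\dim\vec\P}$, via the weak finiteness principle (Theorem \ref{thm:main theorem}), proved with Bierstone--Milman clusterings of $S$, support-reduction of functionals in $W^m(S)^*$, and LP duality after approximating the convex sets by polytopes. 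Your encoding of the data as a shape field does match the paper's reduction (cf.\ \eqref{eq.3.8}, where one must also include the bound $\snorm{\d^\alpha\vec P(x)}\le M$, not only $\vec P(x)\in K(x)$, to get a genuine $(C,1)$-convex shape field). Redoing the full monotonic-label induction directly in the vector-valued setting, as you propose, is a legitimate route --- it is how \cite{FIL16} proceeds --- but it is far heavier than what this paper does, and it forgoes the two devices (gradient trick, clustering/duality) that constitute the paper's actual content; it also would not by itself yield the sharpened constant of Theorem \ref{thm:smooth selection improved}, though Theorem \ref{th1} as stated only asks for ``large enough'' $k^\sharp$.

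One step of your reduction scheme would fail as written: deducing flavor (A) from flavor (B) ``by running the argument with a modulus $\omega$ and letting $\omega\to0$ via Ascoli--Arzel\`a.'' The hypothesis of (A) gives selections bounded in $C^m$, which does \emph{not} provide uniform $C^{m,\omega}$ bounds for an arbitrarily prescribed modulus $\omega$, and an Ascoli--Arzel\`a limit of functions with merely bounded $m$-th derivatives is only $C^{m-1,1}$, not $C^m$; so the limiting argument produces the wrong regularity. The standard fixes are either to invoke the shape-fields theorem directly (its conclusion already produces $\vec F\in C^m(Q_0,\RD)$ with the quantitative bounds, so no passage to the limit in the smoothness parameter is needed), or, for finite $E$, to mollify a $C^{m-1,1}$ selection and add a small correction supported near the finitely many points of $E$ to restore $\vec F(x)\in K(x)$; your compactness reduction of (B) to finite subsets (using closedness of the $K(x)$) is fine.
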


  Therefore, Theorem \ref{th1} tells us when there exists a $C^{m-1,1}$ selection $\vec{F}$ of $(K(x))_{x \in E}$ for the case of infinite $E$ and provides estimates for the $C^m$-norm of a selection for finite $E$. 
  
  Theorem \ref{th1} for the case $D = 1$ and $K(x)$ being a singleton for each $x \in E$ was conjectured by Y. Brudnyi and P. Shvartsman in \cite{BS94-Tr}. 

The number ${k^{\sharp}}$ in Theorem \ref{th1} is called the \underline{finiteness number}. The ${k^{\sharp}}$ obtained in \cite{FIL16} is ${k^{\sharp}} = 100+(D+2)^{l_{*}+100}$, where $l_{*}= \binom{m+n}{n}$.

Here, we give a sharper bound on ${k^{\sharp}}$. Our first result is the following.
	
	\begin{theorem}\label{thm:smooth selection improved}
		The $k^\sharp$ found in Theorem \ref{th1} may be taken to be $k^\sharp=2^{\dim \vec{\P}}$, where $\dim\vec{\P}=D\cdot \binom{n+m-1}{m-1}$. 
	\end{theorem}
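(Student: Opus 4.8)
The plan is to obtain Theorem~\ref{thm:smooth selection improved} as a corollary of a more flexible statement that is the real content of the paper: a \emph{finiteness principle for shape fields with values in $\vec\P$}, whose finiteness number is exactly $2^{\dim\vec\P}$. This is the vector-valued strengthening of the shape-fields principle of \cite{FIL16}, and once it is in hand, Theorem~\ref{thm:smooth selection improved} follows by packaging the convex sets $\brac{K(x)}_{x\in E}$ as a shape field, just as Theorem~\ref{th1} was deduced from shape fields in \cite{FIL16}. Both flavors (A) and (B) come out of a single application: $C^m$ on finite $E$, and $C^{m-1,1}$ on arbitrary $E$ by exhausting $E$ with finite subsets and passing to a limit by a standard normal-families compactness argument.

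For the reduction, given $M>0$ I would attach to each $x\in E$ the convex set
\[
\Gamma(x,M):=\set{\vec P\in\vec\P : \dist\brac{\vec P(x),K(x)}\le \lambda M},
\]
where $\lambda=\lambda(m,n,D)$ is a small constant and $\vec P=(P_1,\dots,P_D)\in\vec\P$ is read as the $(m{-}1)$-jet at $x$ of a candidate map, so that $\vec P(x)\in\RD$. Convexity of $K(x)$ makes each $\Gamma(x,M)$ convex; monotonicity in $M$ is immediate; and the Whitney-convexity axioms demanded of a shape field are verified exactly as in \cite{FIL16}, the thickening by $\lambda M$ supplying the required ``fatness'' that pure membership $\vec P(x)\in K(x)$ lacks when $K(x)$ is thin. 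Now let $S\subseteq E$ with $\abs S\le 2^{\dim\vec\P}$. By the hypothesis of Theorem~\ref{th1} there is $\vec F^S$ with $\norm{\vec F^S}\le 1$ and $\vec F^S(x)\in K(x)$ for all $x\in S$; by Taylor's theorem the jets $\brac{J_x\vec F^S}_{x\in S}$ form a Whitney field on $S$ of seminorm at most $C(m,n,D)$ whose jet at each $x\in S$ lies in $\Gamma(x,C)$. Thus the hypothesis of the shape-fields finiteness principle holds (with $M=C$), and it delivers $\vec F\in\V$ (resp.\ $C^{m-1,1}(\Rn,\RD)$) with $\norm{\vec F}\le C^{\#}$ and $J_x\vec F\in\Gamma(x,C^{\#})$ for all $x\in E$; shrinking the thickening $\lambda M$ toward zero and extracting a normal-families limit then upgrades this to $\vec F(x)\in K(x)$, which is the conclusion.

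The substance of the argument is therefore the shape-fields finiteness principle with the clean bound $2^{\dim\vec\P}$ in place of the $100+(D+2)^{l_*+100}$ of \cite{FIL16}, and I expect this to be the main obstacle. Its proof still runs through a Glaeser/Whitney refinement of the bundle $x\mapsto\Gamma(x,M)$ --- a decreasing sequence of sub-bundles that stabilizes after finitely many steps --- followed by a Whitney-type extension theorem for the resulting Glaeser-stable bundle, and the analytic half of this can be carried over from \cite{FIL16} more or less intact. What must be rebuilt is the combinatorial core: the step showing that the refinement cannot empty the bundle as long as every $2^{\dim\vec\P}$-point subproblem is solvable. The natural route is a Helly-type argument inside $\vec\P\cong\R^{\dim\vec\P}$ --- morally an induction on $\dim\vec\P$ in which splitting a convex bundle across a supporting hyperplane at worst doubles the number of required test points, with the one-dimensional base case (Lipschitz selection from intervals, where two points already suffice) costing the factor $2$ --- so that $\dim\vec\P$ such steps account for precisely $2^{\dim\vec\P}$. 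Making the bookkeeping between the number of refinement levels and $\dim\vec\P$ close at this exact value, uniformly in the vector dimension $D$, is the delicate point.
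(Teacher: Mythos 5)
Your reduction step is broadly consistent with the paper's outline: Theorem \ref{thm:smooth selection improved} is indeed deduced by encoding the sets $K(x)$ as a vector-valued shape field and invoking a sharpened shape-fields finiteness principle (Theorem \ref{thm:SF FP improved}). Two local remarks, though. First, the thickening $\dist\brac{\vec P(x),K(x)}\le\lambda M$ is unnecessary and actually costs you something: in flavor (A) of Theorem \ref{th1} the sets $K(x)$ are convex but not assumed closed, so your final normal-families limit only lands in $\overline{K(x)}$. The paper takes $\vec\Gamma(x,M)=\set{\vec P\in\vec{\P}:\snorm{\d^\alpha\vec P(x)}\le M,\ \vec P(x)\in K(x)}$ with exact membership; $(C,1)$-convexity needs no ``fatness'' because condition (5) of Definition \ref{def.SF} evaluates at $x$ as the genuine convex combination $\vec P(x)=Q_1(x)^2\vec P_1(x)+Q_2(x)^2\vec P_2(x)$ with $Q_1(x)^2+Q_2(x)^2=1$, so membership in $K(x)$ is preserved by convexity alone, and the derivative bounds follow from the product rule together with $\d^\alpha(Q_2\odot_xQ_2)=-\d^\alpha(Q_1\odot_xQ_1)$ for $\abs\alpha>0$. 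Second, the vector-valued shape-fields principle with the \emph{old} finiteness constant is not ``carried over intact'' from \cite{FIL16}: it is scalar there, and the paper upgrades it to $D\ge 2$ by a gradient trick in $\R^{n+D}$ (Section 5), a step your proposal does not account for.

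The genuine gap is in your last paragraph, which is where the theorem actually lives. You propose to rebuild the combinatorial core of the Glaeser/Whitney refinement via a Helly-type induction on $\dim\vec{\P}$, with a doubling heuristic accounting for $2^{\dim\vec{\P}}$, and you yourself flag the bookkeeping as the delicate point --- but no argument is given, and no such argument is known at this generality (even for $m=2$, $D=1$ the sharp-constant proofs of Shvartsman \cite{Shv87} are substantially harder than a hyperplane-splitting induction). The paper never touches the refinement's combinatorics and never proves a \emph{strong} finiteness principle with constant $2^{\dim\vec{\P}}$ directly. Instead it proves the \emph{weak} finiteness principle, Theorem \ref{thm:main theorem}, in which the output constant $\gamma$ is allowed to depend on $\abs S$, and observes that this suffices: one only needs to verify the hypotheses of the original Theorem \ref{thm:SF FP} (with the old $k^\sharp(m,n,D)$ of \cite{FIL16}, extended to $D\ge 2$) on subsets $S$ of bounded cardinality $\abs S\le k^\sharp$, so the $\abs S$-dependence is harmless. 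The weak principle itself is proved not by Helly or Glaeser arguments but by approximating each convex set and the dual unit ball by polytopes (Lemma \ref{lemma:approx convex set}), rewriting the selection norm via two applications of LP duality as a supremum of positively homogeneous functionals evaluated on dual Whitney functionals, and then invoking the Bierstone--Milman clustering machinery (Lemmas \ref{lem.red-1}, \ref{lem.red-2}, \ref{lem.2N-1}) to reduce the support of the dual functional to a set $S'$ with $\#\brac{B(T')}\le\dim\brac{\PD^*\oplus\R}=\dim\vec{\P}+1$ for every trunk, hence $\abs{S'}\le 2^{\dim\vec{\P}}$. Without this bootstrap (or a genuine substitute for your Helly induction), your proposal states the main difficulty rather than resolving it.
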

	
	A few remarks on Theorem \ref{thm:smooth selection improved} are in order. For $D=1$, our result coincides with the one proven by P. Shvartsman \cite{Shv08}. A similar result was obtained by E. Bierstone and P. Milman \cite{BM07} in the case where each $K(x)$ consists of a single point, corresponding to the finiteness principle proven by C. Fefferman in \cite{F05-Sh}. Our present approach is inspired by \cite{BM07}.
	
	In the case $D=1$ and $m=2$, Theorem \ref{thm:smooth selection improved} gives $k^\sharp=4\cdot 2^{n-1}$. This is comparable to the finiteness constant $3\cdot 2^{n-1}$ given by Shvartsman\cite{Shv87}, which he shows to be optimal. See also \cite{BS01}.
	
    To prove Theorem \ref{thm:smooth selection improved}, we will need to extend the finiteness principle proven in \cite{FIL16} to the vector valued case. 	
	
	\begin{theorem}\label{thm:main theorem}
		\newcommand{\X}{\mathbb{X}}
		The following holds for $ \X = \V $ and $ \X = \hV $.
		
		Let $S\subset \R^n$ be a finite set of diameter at most 1. For each $x\in S$, let $\vec{G}(x)\subset\vec{\P}$ be convex. Suppose that for every subset $S' \subset S$ with $\abs{S} \leq 2^{\dim \Vec{\P}}$, there exists $F^{S'}\in \X$ such that $\|F^{S'}\|_{\X}\le 1$ and $\jet_x F^{S'}\in \vec{G}(x)$ for all $x\in S'$.
		
		Then, there exists $F\in \X$ such that $\|F\|_{\X}\le \gamma$ and $\jet_xF\in \vec{G}(x)$ for all $x\in S$.
		
		Here, $\gamma$ depends only on $m,n,D$, and $|S|$.
	\end{theorem}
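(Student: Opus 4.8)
The plan is to recast the statement as a finite-dimensional convex feasibility problem for $(m-1)$-jet tuples and then to refine the shape field $\vec{G}=(\vec{G}(x))_{x\in S}$ in a Glaeser-type fashion. The starting point is the Whitney extension theorem with norm control: for a finite $S\subset\Rn$ of diameter at most $1$ and a tuple $(P^x)_{x\in S}\in\vec{\P}^{\,S}$, there is $F\in\V$ (resp.\ $F\in\hV$) with $\|F\|\le C(m,n)M$ and $\jet_xF=P^x$ for every $x\in S$ if and only if
\[
\Psi\bigl((P^x)_{x\in S}\bigr):=\max\Bigl\{\,a\,\max_{\substack{x\in S\\ |\alpha|\le m-1}}|\d^\alpha P^x(x)|\ ,\ \max_{\substack{x,y\in S,\ x\neq y\\ |\alpha|\le m-1}}\frac{|\d^\alpha(P^x-P^y)(x)|}{|x-y|^{m-|\alpha|}}\,\Bigr\}\ \le\ M,
\]
with $a=1$ in the $\V$ case and $a=0$ in the $\hV$ case. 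The two spaces thus differ only through the low-order anchoring term and can be handled in parallel. The conclusion we seek is exactly that the convex set $\prod_{x\in S}\vec{G}(x)\subset\vec{\P}^{\,S}$ contains a tuple on which the convex functional $\Psi$ is at most $\gamma/C(m,n)$, while the hypothesis is exactly that for every $S'\subset S$ with $|S'|\le 2^{\dim\vec{\P}}$ there is such a tuple (extended from $S'$ to $S$ by the jets of the local solution) on which $\Psi\le C'(m,n)$. If $|S|\le 2^{\dim\vec{\P}}$ there is nothing to prove, so assume $|S|>2^{\dim\vec{\P}}$.

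Next I would build the refined fields $\vec{G}=\vec{G}^{(0)}\supset\vec{G}^{(1)}\supset\cdots$, where $P\in\vec{G}^{(l+1)}(x)$ iff $P\in\vec{G}^{(l)}(x)$ and for every $y\in S$ there is $Q\in\vec{G}^{(l)}(y)$ with $|\d^\alpha(P-Q)(x)|\le C'|x-y|^{m-|\alpha|}$ for all $|\alpha|\le m-1$. Each $\vec{G}^{(l+1)}(x)$ is convex, being an intersection over $y$ of linear projections of convex sets. One then argues that the sequence stabilizes within $\dim\vec{\P}$ steps: the affine hulls of the $\vec{G}^{(l)}(x)$ are a nested family of affine subspaces of $\vec{\P}$, hence stabilize in at most $\dim\vec{\P}$ steps, after which a convexity argument forces the sets themselves to become constant; the convex setting — as opposed to the affine, singleton-target setting — requires care here, as in \cite{BM07,FIL16}. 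Write $\vec{G}^\infty$ for the limit: it is \emph{self-consistent}, i.e.\ for every $x$, every $P\in\vec{G}^\infty(x)$ and every $y\in S$ there is $Q\in\vec{G}^\infty(y)$ that is $C'$-compatible with $P$ at scale $|x-y|$.

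The heart of the proof, and the step I expect to be the main obstacle, is to certify from the hypothesis alone that every $\vec{G}^\infty(x)$ is nonempty and contains a jet of norm bounded by $C(m,n,D)$. I would do this by induction on the refinement level $l$, carrying the invariant that nonemptiness of $\vec{G}^{(l)}(x)$ (with a controlled element) can be certified using at most $2^l$ points of $S$. For $l=0$ the singleton $\{x\}$ together with the hypothesis for $S'=\{x\}$ suffices; in the step $l\to l+1$ one writes $\vec{G}^{(l+1)}(x)=\bigcap_{y\in S}U_y$ with each $U_y\subset\vec{\P}$ convex, uses Helly's theorem in $\vec{\P}$ to reduce to certifying $\bigcap_{y\in Y}U_y\neq\varnothing$ for $Y\subset S$ with $|Y|\le\dim\vec{\P}+1$, and then must arrange the local solutions furnished by the hypothesis so that the resulting certifying subset of $S$ — which combines a witness for $x$ with witnesses for the points of $Y$ — collapses to a single set of cardinality at most $2^{l+1}$ rather than growing by the factor $\dim\vec{\P}+2$. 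Getting this bookkeeping to close with base $2$ (rather than a base polynomial in $\dim\vec{\P}$) is the delicate point; it is modeled on the counting of \cite{BM07}. Granting it, the hypothesis for subsets of size $2^{\dim\vec{\P}}$ certifies $\vec{G}^\infty(x)\neq\varnothing$, with controlled elements, for all $x\in S$.

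Finally I would extract the selection. Exploiting self-consistency of $\vec{G}^\infty$ and convexity of the sets $\vec{G}^\infty(x)$ — fixing an ordering of $S$, choosing jets $P^x\in\vec{G}^\infty(x)$ one point at a time, and using a short Helly argument at each step to keep the choices mutually $C'$-compatible — one obtains a single tuple $(P^x)_{x\in S}\in\prod_{x\in S}\vec{G}^\infty(x)$ with $\Psi\bigl((P^x)_{x\in S}\bigr)\le C(m,n,D,|S|)$; here it is harmless that the compatibility constant, and hence this bound, is permitted to degrade with $|S|$, since $\gamma$ is allowed to depend on $|S|$. Feeding this tuple into the Whitney extension theorem with norm control yields $F\in\V$ (resp.\ $F\in\hV$) with $\|F\|\le\gamma(m,n,D,|S|)$ and $\jet_xF=P^x\in\vec{G}(x)$ for all $x\in S$, the required selection.
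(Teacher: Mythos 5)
Your proposal leaves the crux of the theorem unproven, and the step you ask the reader to grant is precisely where the finiteness number $2^{\dim\vec{\P}}$ has to come from. Your plan is a primal, Glaeser-type one: iteratively refine the fields $\vec{G}^{(l)}(x)$, certify nonemptiness of each refined set using few points via Helly, and make the point-count close with base $2$. But Helly in $\vec{\P}$ reduces each refinement step to $\dim\vec{\P}+1$ constraint sets, so the naive combination of witnesses grows like $(\dim\vec{\P}+2)^{l}$, and you offer no mechanism for collapsing this to $2^{l+1}$; you explicitly flag this as ``the delicate point'' and then proceed by ``granting it.'' The counting in \cite{BM07} that you hope to imitate is not a Helly-type certification of nonemptiness at all: it is a support-reduction statement for \emph{dual} functionals on Whitney fields (reduce $\xi\in W^m(S)^*$ along a clustering, trunk by trunk, while preserving $\sum_x\xi_x$ and a homogeneous functional $\sum_x\Phi_x(\xi_x)$), and the $2^{N-1}$ bound comes from counting branch nodes of trunks. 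The paper's proof gets to that dual setting by first approximating each $\vec{G}(x)$ and the unit ball of $W^m(S)$ by polytopes and applying LP duality twice, turning the selection norm into a supremum of a degree-one homogeneous quantity over dual functionals supported on $S$; only then does the clustering machinery apply and yield a subset $S'$ with $|S'|\le 2^{\dim\vec{\P}}$ on which the selection norm is comparable. Without some analogous dualization, your base-$2$ bookkeeping is an unsupported assertion, not a proof.

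There are two further genuine errors in the primal scheme as written. First, the stabilization claim is false for convex (as opposed to affine) fields: once the affine hulls of the $\vec{G}^{(l)}(x)$ stabilize, convexity does not force the sets themselves to become constant --- nested convex bodies with a fixed affine hull can strictly decrease forever --- so the limit field $\vec{G}^{\infty}$ with the properties you use is not available after $\dim\vec{\P}$ steps (this failure of termination for set-valued Glaeser refinement is exactly one of the obstructions the shape-field formalism of \cite{FIL16} is designed to avoid). Second, the final extraction does not follow from one-step self-consistency: knowing that for each $x$, each $P\in\vec{G}^{\infty}(x)$ and each $y$ there is \emph{some} compatible $Q\in\vec{G}^{\infty}(y)$ gives pairwise compatibility only, and a ``short Helly argument'' cannot produce a single tuple $(P^x)_{x\in S}$ that is simultaneously compatible, because Helly reduces the number of constraint sets but does not certify that the small intersections $\vec{G}^{\infty}(y)\cap\bigcap_i\{Q:\ Q\ \text{compatible with}\ P^{x_i}\}$ are nonempty; that requires consistency to all orders (equivalently, many more refinement iterations), which your argument has not established. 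Allowing $\gamma$ to depend on $|S|$ does not repair either of these gaps.
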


Because the constant $\gamma$ depends on the number of points in $S$, following \cite{Shv08}, we will refer to Theorem \ref{thm:main theorem} as a ``weak finiteness principle". 

To conclude the introduction, we give an overview of how we prove Theorems \ref{thm:smooth selection improved} and \ref{thm:main theorem}. The proof of Theorem \ref{th1} given in \cite{FIL16} is via a more general finiteness principle for shape fields, see Theorem \ref{thm:SF FP} below. Using Theorem \ref{thm:main theorem}, we will show an improved bound for ${k^{\sharp}}$ in the finiteness principle for shape fields (i.e., Theorem \ref{thm:SF FP}); we can then deduce the bound for ${k^{\sharp}}$ in Theorem \ref{th1}, obtaining the bound asserted in Theorem \ref{thm:smooth selection improved}. The heart of the matter therefore lies in Theorem \ref{thm:main theorem}. To put things in perspective, we would like to point out that one can't directly apply the techniques from \cite{BM07} because of the nonlinear structure in the selection problem and that the result in \cite{Shv08} is for scalar-valued functions. To prove our main theorem (Theorem \ref{thm:main theorem}), we will adapt the strategy from \cite{BM07} with some new ingredients: Instead of linear structure, we will handle general convex structure using the duality theorem of linear programming to describe the relevant convex sets.

This paper is part of a literature on extension, interpolation, and
selection of functions, going back to H. Whitney's seminal work \cite{W34-1,W34-2,W34-3}, and including fundamental contributions by G. Glaeser \cite{G58}, Y. Brudnyi and P. Shvartsman \cite{Shv01,Shv02,BS01,BS85,BS94-Tr,BS94-W,BS97,BS98,Shv87}, and E. Bierstone, P. Milman,
and W. Paw{\l}ucki \cite{BM07,BMP03,BMP06}, and C. Fefferman \cite{F05-J,F05-L,F05-Sh,F06,F07-L,F07-St,F09-Data-3,F09-Int,F10,FShv18}.

\begin{acknowledgement}
We are indebted to Pavel Shvartsman for his valuable comments. 

The first author is supported by the UC Davis Summer Graduate Student Researcher Award
and the Alice Leung Scholarship in Mathematics. The second author is supported by NSF Grant
DMS-1554733 and the UC Davis Chancellor’s Fellowship
\end{acknowledgement}

\section{Background and main results}

	\subsection{Polynomial and Whitney fields}
	\newcommand{\ring}{\mathcal{R}}
	\newcommand{\vP}{{\Vec{P}}}
	\newcommand{\PD}{{\vec{\P}}}
	\newcommand{\M}{\mathcal{M}}
	
	We write $ \P $ to denote the vector space of polynomials on $ \Rn $ with degree no greater than $ m-1 $.
	
	For $ x \in \Rn $, let $ F $ be $ (m-1) $-times differentiable at $ x $. We identify the $ (m-1) $-jet of $ F $ at $ x $ with the $ (m-1)^{\text{st}} $-degree Taylor polynomial of $ F $ at $ x $:
	\begin{equation*}
	\jet_x F(y) := \sum_{\abs{\alpha} \leq m-1}\frac{\d^\alpha F(x)}{\alpha!}(y-x)^\alpha.
	\end{equation*}
	For $ P, Q \in \P $ and $ x \in \Rn $, we define
	\begin{equation*}
	P \odot_x Q := \jet_x(PQ).
	\end{equation*}
	The operation $ \odot_x $ turns $ \P $ into a ring, which we denote by $ \ring_x $.

	We define
	\begin{equation*}
	\PD := \underbrace{\P \oplus \cdots \oplus \P}_{\text{$ D $ copies}}.
	\end{equation*}
	Thus, every $ \vP \in \PD $ has the form $ \vP = (P_1, \cdots, P_D) $, with $ P_j \in \P $ for $ j = 1, \cdots, D $.
	
	Let $ \vec{F} = (F_1, \cdots, F_D) $ be a $ \RD $-valued function $ (m-1) $-times differentiable at $ x \in \Rn $. We define
	\begin{equation*}
	\jet_x\vec{F} := (\jet_x F_1, \cdots, \jet_x F_D) \in \PD.
	\end{equation*}
	
	We will also use the $ \ring_x $-module structure on $ \PD $, whose multiplication is given by
	\begin{equation*}
	R\odot_x \vP := (R\odot_x P_1, \cdots, R\odot_x P_D) \in \PD,
	\end{equation*}
	for $ x \in \Rn $, $ \vP = (P_1, \cdots, P_D) \in \PD $, and $ R \in \ring_x $.

	Let $S \subset \Rn $ be a finite set. A \underline{Whitney field} is an array $ (\vec{P}^x)_{x \in S} $ parameterized by points in $ S $, where $ \vP^x \in \PD $ for $ x \in S $. We write $ W^m(S) $ to denote the space of Whitney fields on $S$.
	
	Given $ (\vP^x)_{x \in S} \in W^m(S) $, we define
	\begin{equation}
	\norm{(\vP^x)_{x \in S}}_{W^m(S)}:= \max_{\substack{x \in S\\\abs{\alpha} \leq m-1}} \snorm{\d^\alpha \vP^x(x)} + \max_{\substack{x,y \in S, x \neq y\\\abs{\alpha} \leq m-1}} \frac{\snorm{\d^\alpha (\vP^x - \vP^y)(x)}}{\abs{x - y}^{m-\abs{\alpha}}}.
	\label{eq.W-1}
	\end{equation}
	Note that $ \norm{\cdot}_{W^m(S)} $ is a norm on $ W^m(S) $.
	
	We will also be using the seminorm
	\begin{equation}
	\norm{(\vP^x)_{x \in S}}_{\dot{W}^m(S)}:=  \max_{\substack{x,y \in S, x \neq y\\\abs{\alpha} \leq m-1}} \frac{\snorm{\d^\alpha (\vP^x - \vP^y)(x)}}{\abs{x - y}^{m-\abs{\alpha}}}.
	\label{eq.W-2}
	\end{equation}
	
	We use $ \PD^* $ to denote the dual of $ \PD $. We use $ W^m(S)^* $ to denote the dual of $ W^m(S) $. An element $ \xi \in W^m(S)^* $ has the form $ \xi = (\xi_x)_{x \in S} $. We use $ \xi[(\vP^x)_{x \in S}] $ to denote the action of $ \xi \in W^m(S)^* $ on $ (\vP^x)_{x \in S} \in W^m(S) $.

	\begin{lemma}\label{lem.WT}
		Fix $m,n,D\in\N$. There exists $C=C(m,n)<\infty$ such that the following hold.
		\begin{enumerate}
			\item Let $ S \subset \Rn $ be a finite set.
			\begin{enumerate}
				\item For all $ \vec{F} \in \V $, $ \norm{(\jet_x \vec{F})_{x \in S}}_{{W}^m(S)} \leq C\norm{\vec{F}}_\V $.
				\item Let $C^m_{loc}(\Rn,\RD)$ denotes the space of $\RD$-valued functions on $\Rn$ with continuous derivatives up to order $m$. For all $ \vec{F} \in C^m_{loc}(\Rn,\RD) $, $ \norm{(\jet_x \vec{F})_{x \in S}}_{\dot{W}^m(S)} \leq C\norm{\vec{F}}_\hV $. 
			\end{enumerate}
			
			\item Let $ S \subset \Rn $ be a finite set. There exists a linear map $ T_w^S : W^m(S) \to \V $ such that
			\begin{enumerate}
				\item $ \norm{T_w^S[(\vP^x)_{x \in S}]}_\V \leq \norm{(\vP^x)_{x \in S}}_{W^m(S)} $,
				\item $ \norm{T_w^S[(\vP^x)_{x \in S}]}_\hV \leq \norm{(\vP^x)_{x \in S}}_{\dot{W}^m(S)} $, and
				\item $ \jet_x \circ T_w^S[(\vP^x)_{x \in S}] = \vP^x $ for each $ x \in S $.
			\end{enumerate}
			
		\end{enumerate}
	\end{lemma}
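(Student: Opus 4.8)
The plan is to reduce both assertions to the classical linear Whitney extension theorem. Every norm that appears here --- the two in \eqref{eq.W-1} and \eqref{eq.W-2}, as well as $\norm{\cdot}_\V$ and $\norm{\cdot}_\hV$ --- is the maximum over the $D$ coordinates of $\RD$ of the corresponding scalar quantity, so it suffices to prove each statement one coordinate at a time; in particular no constant will depend on $D$, and I may as well take $D = 1$. The proof then splits into the ``trace'' half (part (1)) and the ``extension'' half (part (2)).

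Part (1) is Taylor's theorem. Fix $\vec F$ and $x, y \in S$ with $x \neq y$. Since $\jet_x \vec F$ is the degree-$(m-1)$ Taylor polynomial of $\vec F$ at $x$, we have $\d^\alpha(\jet_x\vec F)(x) = \d^\alpha\vec F(x)$ for $\abs\alpha \leq m-1$, so the first term of \eqref{eq.W-1} is at most $\norm{\vec F}_\V$. For the difference term, $\d^\alpha(\jet_y\vec F)$ is exactly the degree-$(m-1-\abs\alpha)$ Taylor polynomial of $\d^\alpha\vec F$ at $y$, so $\d^\alpha(\jet_x\vec F - \jet_y\vec F)(x)$ is the corresponding Taylor remainder of $\d^\alpha\vec F$; its integral form (equivalently, iterating the mean value theorem) bounds it by $C(m,n)\,\norm{\vec F}_\hV\,\abs{x-y}^{m-\abs\alpha}$ for $\vec F \in C^m_{loc}(\Rn,\RD)$. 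Dividing by $\abs{x-y}^{m-\abs\alpha}$ and maximizing over $\alpha, x, y$ gives (1b); combined with the bound on the first term (and $\norm{\vec F}_\hV \leq \norm{\vec F}_\V$) this gives (1a). This step is routine.

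For part (2) I would write down the standard Whitney extension operator and observe it is linear. Let $\set{Q_\nu}$ be a Whitney decomposition of $\Rn \setminus S$ with $\mathrm{diam}(Q_\nu) \approx \dist(Q_\nu, S)$, let $\set{\theta_\nu}$ be a subordinate partition of unity with $\sum_\nu \theta_\nu \equiv 1$ on $\Rn \setminus S$ and $\snorm{\d^\beta \theta_\nu} \leq C(n)\,\mathrm{diam}(Q_\nu)^{-\abs\beta}$, and for each $\nu$ choose $x_\nu \in S$ with $\dist(x_\nu, Q_\nu) \leq C(n)\,\mathrm{diam}(Q_\nu)$. Define $T_w^S[(\vP^x)_{x\in S}](y) := \sum_\nu \theta_\nu(y)\,\vP^{x_\nu}(y)$ for $y \notin S$ and $T_w^S[(\vP^x)_{x\in S}](y) := \vP^y(y)$ for $y \in S$; this is manifestly linear in $(\vP^x)_{x\in S}$. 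That the output lies in $\V$ and satisfies (c) is the content of the classical Whitney extension theorem, and the bounds (a), (b) come from the usual Leibniz expansion $\d^\alpha T_w^S[(\vP^x)] = \sum_{\beta \leq \alpha}\binom\alpha\beta\sum_\nu \d^\beta\theta_\nu\,\d^{\alpha-\beta}\vP^{x_\nu}$: the $\beta = 0$ term is a convex combination of the $\d^\alpha\vP^{x_\nu}$, each controlled by $\norm{(\vP^x)_{x\in S}}_{W^m(S)}$ after Taylor expanding $\d^\alpha\vP^{x_\nu}$ about $x_\nu$ and using $\dist(x_\nu, Q_\mu) \leq C(n)\,\mathrm{diam}(Q_\mu)$ on the cube $Q_\mu$ containing $y$, while for $\beta \neq 0$ one uses $\sum_\nu \d^\beta\theta_\nu \equiv 0$ to replace $\d^{\alpha-\beta}\vP^{x_\nu}$ by the differences $\d^{\alpha-\beta}(\vP^{x_\nu} - \vP^{x_{\nu_0}})$ against a fixed reference index $\nu_0$, so that the factor $\mathrm{diam}(Q_\mu)^{-\abs\beta}$ is absorbed by $\abs{x_\nu - x_{\nu_0}} \leq C(n)\,\mathrm{diam}(Q_\mu)$ together with the $\dot W^m(S)$ seminorm; the behaviour as $y \to S$ is handled by comparing with $\vP^x$ at the nearest $x \in S$. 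All the constants produced this way are geometric, so they depend only on $m$ and $n$, uniformly in $\abs S$ and in $D$ --- which is exactly what (a)--(b) assert. The only thing requiring genuine care is this last constant bookkeeping; there is no conceptual obstacle, the one point worth double-checking being that $T_w^S$ really lands in $\V$ (and not merely in $C^m_{loc}(\Rn,\RD)$) and that its bound is uniform over all finite $S \subset \Rn$.
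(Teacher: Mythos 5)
Your route is the same as the paper's, which disposes of part (1) as Taylor's theorem and of part (2) by quoting the classical Whitney extension theorem for finite sets applied component-wise; your reduction to $D=1$ and your argument for part (1) are fine. The gap is precisely the point you flag at the end but do not resolve: the operator you define does not satisfy (2a). For $y$ with $\dist(y,S)=R$ large, the Whitney cubes meeting the support of the relevant $\theta_\nu$ have diameter comparable to $R$, so Taylor-expanding $\d^\alpha \vec{P}^{x_\nu}$ about $x_\nu$ only yields $\abs{\d^\alpha T_w^S[(\vec{P}^x)_{x\in S}](y)}\le C\norm{(\vec{P}^x)_{x\in S}}_{W^m(S)}(1+R)^{m-1-\abs{\alpha}}$, which is not uniform in $y$: the pure Whitney extension of a nonconstant jet grows polynomially at infinity. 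Concretely, for $S=\set{x_0}$ and $P^{x_0}(y)=y_1-(x_0)_1$ (any $m\ge 2$, $D=1$), your formula returns $T_w^S[P^{x_0}]=P^{x_0}$ itself, which is not in $\V$, while $\norm{P^{x_0}}_{W^m(S)}=1$; so (2a) fails for the operator as you built it.

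The standard repair for (a) is to multiply the Whitney extension by a cutoff $\chi$ with $\chi\equiv 1$ on $\set{\dist(\cdot,S)\le 1}$ and $\supp\chi\subset\set{\dist(\cdot,S)\le 2}$: on that region your estimates are uniform up to a constant depending on an upper bound for $\mathrm{diam}(S)$ (harmless, since $\mathrm{diam}(S)\le 1$ in every application in the paper), and (c) survives because $\chi\equiv 1$ near $S$. But the cutoff ruins (b): a constant jet $A$ at a single point has $\dot W^m(S)$ seminorm $0$, yet $\chi A$ has nonvanishing $m$-th derivatives. In fact the single-point example above shows that no single linear map can satisfy (a), (b), (c) simultaneously: (b) applied to $P^{x_0}(y)=y_1-(x_0)_1$, whose seminorm is $0$, forces the extension to be a polynomial of degree at most $m-1$ whose $(m-1)$-jet at $x_0$ is $P^{x_0}$, i.e.\ $P^{x_0}$ itself, contradicting the boundedness demanded by (a). So Lemma \ref{lem.WT}(2) has to be read as two statements about two operators --- the uncut Whitney operator for the homogeneous bound (b), and its cut-off version for the inhomogeneous bound (a) --- which is how it is used later in the paper. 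This imprecision is inherited from the paper's own statement (whose proof is only a citation), but your write-up claims (a) and (b) for the one operator you define and, as written, does not establish (a); it should be amended along the lines above.
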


	Lemma \ref{lem.WT}(1) is simply Taylor's theorem. Lemma \ref{lem.WT}(2) is an $\RD$-valued version of the Whitney Extension Theorem (for finite sets), the proof of which follows the $\R$-valued case by component-wise treatment. The proof for the classical Whitney Extension Theorem can be found in e.g. \cite{St79}.

	\subsection{Shape fields}
	
	In this section, we generalize a key object introduced in \cite{FIL16}.

	\begin{definition}\label{def.shape field}
		\newcommand{\vG}{{\Vec{\Gamma}}}
		
		Let $S \subset \Rn$ be finite. For each $x \in S$, $0 < M < \infty$, let $\vG(x,M) \subset \vP$ be a (possibly empty) convex set. We say that $(\vG(x,M))_{x \in S, M > 0}$ is a \underline{vector-valued shape field} if for all $x \in S$ and $ 0 < M' \leq M < \infty $, we have $ \vG(x,M') \subset \vG(x,M) $.
		
		When $ D = 1 $, we write $ \Gamma(x,M) $ instead of $ \vG(x,M) $, and we omit the adjective ``vector-valued''.
		
	\end{definition}
	
	%

	\begin{definition}\label{def.SF}
		\newcommand{\vG}{{\Vec{\Gamma}}}
		Let $ C_w, \delta_{\max} $ be positive real numbers. We say that a vector-valued shape field $ (\vG(x,M))_{x \in S, M > 0}$ is \underline{$ (C_w, \delta_{\max}) $-convex} if the following condition holds:
		
		Let $ 0 < \delta \leq \delta_{\max} $, $ x \in S $, $ 0 < M < \infty $, $ \vP_1, \vP_2 \in \PD $, $ Q_1, Q_2 \in \P $. Assume that
		\begin{enumerate}
			\item $ \vP_1, \vP_2 \in \vG(x,M) $;
			\item $ \snorm{\d^\alpha (\vP_1 - \vP_2)(x)} \leq M\delta^{m-\abs{\alpha}} $ for $ \abs{\alpha} \leq m-1 $;
			\item $ \abs{\d^\alpha Q_i(x)} \leq \delta^{-\abs{\alpha}} $ for $ \abs{\alpha} \leq m-1 $ and $ i = 1,2 $;
			\item $ Q_1 \odot_x Q_1 + Q_2 \odot_x Q_2 = 1 $.\\
			Then
			\item $ \vP := \sum_{i = 1}^2 (Q_i \odot_x Q_i) \odot_x \vP_i
			\in \vG(x,C_wM) $. Here, in each summand, the first multiplication is the ring multiplication in $ \ring_x $, and the second is the action of $ \ring_x $ on the $ \ring_x $-module.
		\end{enumerate}
		
	\end{definition}

	\subsection{Main technical results}

	The main technical results are the following two theorems. The first is the Finiteness Principle for vector-valued shape fields, and second improves the finiteness constant.
	
	\begin{theorem}\label{thm:SF FP}
		There exists $k^{\sharp} = k^{\sharp}(m,n,D)$ such that the following holds.
		
		Let $E\subset \R^n$ be a finite set and $(\vec{\Gamma}(x,M))_{x\in E,M>0}$ be a $(C_w,\delta_{\max})$-convex vector-valued shape field. Let $Q_0\subset\R^n$ be a cube of side length $\delta_{Q_0}\le\delta_{\max}$ and $x_0\in E\cap 5Q_0$ and $M_0>0$ be given.
		
		Suppose that for each $S\subset E$ with $|S|\le k^\sharp$, there exists a Whitney field $(\vec{P}^z)_{z\in S}$ such that
		\begin{equation}\label{eq:SF homogeneity condition}
		\|(\vec{P}^z)_{z\in S}\|_{\dot{W}^m(S)}\le M_0
		\end{equation}
		
		and
		\begin{equation}
		\vec{P}^z\in \vec{\Gamma}(z,M_0)\text{ for all }z\in S.
		\end{equation}
		
		Then, there exist $\vec{P}^0\in\vec{\Gamma}(x_0,M_0)$ and $\vec{F}\in C^m(Q_0,\R^D)$ such that
		\begin{itemize}
			\item $J_z\vec{F}\in \vec{\Gamma}(z,CM)$ for all $z\in E\cap 5Q_0$.
			\item $\snorm{\d^\alpha(\vec{F}-\vec{P}^0)(x)}\le CM_0\delta_{Q_0}^{m-|\alpha|}$ for all $x\in Q_0, |\alpha|\le m$.
			\item In particular, $\snorm{\d^\alpha\vec{F}(x)}\le CM_0$ for all $x\in Q_0, |\alpha|=m$.
		\end{itemize}
	\end{theorem}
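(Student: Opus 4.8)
The plan is to carry out, in the present vector-valued setting, the Calder\'on--Zygmund induction by which the scalar shape field finiteness principle is established in \cite{FIL16}, invoking Theorem \ref{thm:main theorem} at the one point where the finiteness hypothesis for boundedly many points must be turned into usable local information --- in \cite{FIL16} this is done by a crude weak finiteness argument, which is precisely where the large finiteness number there originates. Every other ingredient of the scheme costs only a constant depending on $m,n,D$, so substituting Theorem \ref{thm:main theorem} is exactly what lowers the finiteness number to $2^{\dim\vec{\P}}$; in this sense the theorem is a careful bookkeeping consequence of Theorem \ref{thm:main theorem} together with the machinery of \cite{FIL16}, the genuinely new work being to push the convex --- rather than affine --- structure of the sets $\vec{\Gamma}(x,M)$ through that machinery.

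Following \cite{FIL16}, I would recast the conclusion as a local ``Main Lemma'' attached to a label --- a monotonic family of sets of multi-indices of order $\le m-1$, one for each of the $D$ coordinates of $\vec{\P}$ --- and prove it by induction over the finite, $m,n,D$-dependent collection of labels, ordered so that the base case is the label carrying the strongest nondegeneracy, the full theorem being the instance that imposes no nondegeneracy condition. In the inductive step one runs a Calder\'on--Zygmund decomposition of $Q_0$, calling a dyadic subcube $Q$ OK (and not bisecting it further) once, using the hypothesis for subsets of at most $2^{\dim\vec{\P}}$ points, the shape field restricted to a fixed dilate of $Q$ verifies the Main-Lemma hypotheses for a strictly smaller label. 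The standard good-geometry facts for such decompositions (OK cubes tile $Q_0$, have bounded overlap, neighbours comparable in size) then let one apply the inductive hypothesis on each OK cube rescaled to unit size, producing local solutions $\vec F_Q$ and local basepoint polynomials; after a keystone-cube bookkeeping one extracts a single $\vec P^0\in\vec{\Gamma}(x_0,M_0)$ compatible with all of them and patches, $\vec F:=\sum_Q\theta_Q\vec F_Q$, where $\{\theta_Q\}$ is a partition of unity subordinate to the decomposition. The bounds $\snorm{\d^\alpha(\vec F-\vec P^0)(x)}\le CM_0\delta_{Q_0}^{m-\abs{\alpha}}$ come from the usual telescoping/keystone estimate, the homogeneity hypothesis \eqref{eq:SF homogeneity condition} forcing neighbouring $\vec F_Q$'s --- and $\vec P^0$ --- to agree to order $m$ at the overlap scale; the case $\abs{\alpha}=m$ is then immediate.

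The delicate estimate is $\jet_z\vec F\in\vec{\Gamma}(z,CM_0)$ for $z\in E\cap 5Q_0$. Writing $\jet_z\vec F=\sum_Q(\jet_z\theta_Q)\odot_z(\jet_z\vec F_Q)$, this is a $\odot_z$-weighted combination of boundedly many jets, each lying in $\vec{\Gamma}(z,CM_0)$ by the inductive hypothesis. To keep the combination inside $\vec{\Gamma}(z,C'M_0)$ I would rewrite the partition-of-unity weights at $z$ as sums of squares $Q_i\odot_z Q_i$ with $\abs{\d^\alpha Q_i(z)}\le\delta^{-\abs{\alpha}}$ at the scale $\delta$ of the cubes meeting $z$ --- precisely the form demanded by conditions (3)--(4) of Definition \ref{def.SF} --- and then apply the two-fold $(C_w,\delta_{\max})$-convexity axiom along a binary tree over those cubes. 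This is legitimate because the derivative estimates just established supply the closeness hypothesis (2) of Definition \ref{def.SF}, and because only boundedly many cubes meet $z$, so that $C'=C'(m,n,D)$.

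The step I expect to be the main obstacle is the label descent built into the Calder\'on--Zygmund stopping rule: proving that on an OK cube the finiteness hypothesis for $\le 2^{\dim\vec{\P}}$ points genuinely upgrades to a Main-Lemma hypothesis at a strictly smaller label. This is where the sharp finiteness number must be paid for, and where I would invoke Theorem \ref{thm:main theorem} on the points of $E$ lying in the relevant dilate of the cube. One must then ensure that the $\abs{S}$-dependence of the constant $\gamma$ in Theorem \ref{thm:main theorem} does not leak into the final constant $C$: I would arrange for Theorem \ref{thm:main theorem} to be used only to exhibit the required nondegenerate local structure (e.g.\ that the relevant refined convex sets are nonempty), or, where an honest local function is unavoidable, only on cubes meeting $E$ in a bounded number of points, so that with the finiteness of the label collection and the keystone-cube patching all constants reduce to $m,n,D$. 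A last point, inherited from \cite{FIL16} but requiring re-verification in the vector-valued case, is that the sets $\vec{\Gamma}(z,M)$ are merely convex; following the proof of Theorem \ref{thm:main theorem}, one copes with this by describing the convex sets that arise through linear-programming duality.
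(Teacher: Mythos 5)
Two problems, one about fit to the statement and one about substance. First, the statement you are proving asserts only that \emph{some} $k^\sharp=k^\sharp(m,n,D)$ works; the sharp value $2^{\dim\vec{\P}}$ is a separate result (Theorem \ref{thm:SF FP improved}) deduced afterwards. The paper's proof of Theorem \ref{thm:SF FP} is not a re-run of the Calder\'on--Zygmund/label induction at all: it is a short reduction (the ``gradient trick'') to the scalar case already proven in \cite{FIL16}. One lifts to $\R^{n+D}$ with variables $z=(x,\xi)$, sets $E^+=\{(x,0):x\in E\}$, and attaches to each $(x,0)$ the scalar shape field $\Gamma((x,0),M)=\{P\in\P^+:\ P(x,0)=0,\ \nabla_\xi P(x,0)\in\vec{\Gamma}(x,M)\}$ of degree-$m$ polynomials, working in $C^{m+1}(\R^{n+D})$; the lifted Whitney fields $P^{(x,0)}(y,\xi)=\sum_j \xi_j P^x_j(y)$ verify the hypotheses, $(C_w,\delta_{\max})$-convexity passes to the lift because $\nabla_\xi$ commutes with the $\odot$-multiplications, and the scalar theorem yields $F\in C^{m+1}$ and $P^0$; then $\vec{G}:=\nabla_\xi F(\cdot,0)$ and $\vec{Q}^0:=\nabla_\xi P^0(\cdot,0)$ give exactly the stated conclusions, one $\xi$-derivative converting $C^{m+1}$ bounds into the $C^m$ bounds. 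Your plan never considers such a reduction and instead proposes to rebuild the whole machinery of \cite{FIL16} in the vector-valued setting. That is a legitimate route in principle, but as written it is a sketch of that long argument rather than a proof: the label formalism for $D$ components, the OK/keystone-cube analysis, the patching estimate keeping jets inside the merely convex sets $\vec{\Gamma}(z,CM_0)$, and the step you yourself flag as the main obstacle (label descent under the stopping rule) are all left unestablished.

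The concrete gap is how Theorem \ref{thm:main theorem} is supposed to enter. For the present statement it is not needed at all. For the sharpened constant, the paper does not thread it through the stopping rule of the decomposition; it applies it once, externally: given the hypothesis for all $S'$ with $|S'|\le 2^{\dim\vec{\P}}$, Theorem \ref{thm:main theorem} produces, for every $S$ with $|S|\le k^\sharp_{\mathrm{old}}(m,n,D)$, a function whose jets form a Whitney field of $\dot{W}^m$-norm at most $C\gamma M_0$ lying in the $\vec{\Gamma}(x,M_0)$; since $|S|$ is bounded by $k^\sharp_{\mathrm{old}}(m,n,D)$, the $|S|$-dependence of $\gamma$ collapses to a dependence on $m,n,D$, and Theorem \ref{thm:SF FP} with the old constant finishes. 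Your proposed in-induction use must confront the $\gamma(|S|)$ leakage directly, and the escape hatches you offer do not work as stated: cubes of a Calder\'on--Zygmund decomposition can meet $E$ in arbitrarily many points, so restricting the use of Theorem \ref{thm:main theorem} to cubes meeting $E$ in boundedly many points is not available, and using it ``only to exhibit nonemptiness'' of the refined convex sets does not supply the quantitative Whitney-field bound that the inductive (Main Lemma) hypothesis at the smaller label requires. So even granting the \cite{FIL16} machinery wholesale, the specific mechanism by which the sharp finiteness number enters your argument is not justified, whereas the paper's separation into a crude vector-valued theorem (via the gradient trick) plus an external bootstrap avoids the issue entirely.
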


	The case of scalar-valued shape fields ($D=1$) was proven in \cite{FIL16}. In this paper, we will use the $D=1$ case to prove the more general Theorem \ref{thm:SF FP} stated above using a gradient trick, inspired by \cite{FIL16, FL14}.
	
	\begin{theorem}\label{thm:SF FP improved}
		One may take $ k^{\sharp} = 2^{\dim \vec{\P}} $ in Theorem \ref{thm:SF FP}.
	\end{theorem}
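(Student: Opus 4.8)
The plan is a bootstrapping argument: I would upgrade the weak finiteness principle (Theorem~\ref{thm:main theorem}) into a sharp finiteness number for the shape field finiteness principle, using the already-available Theorem~\ref{thm:SF FP} as a black box. Write $k_0^\sharp=k^\sharp(m,n,D)$ for the (large) finiteness number furnished by Theorem~\ref{thm:SF FP}. The crucial point is that, although the constant $\gamma$ in Theorem~\ref{thm:main theorem} depends on $|S|$, in the argument below we will only ever apply that theorem to sets $S$ with $|S|\le k_0^\sharp$, and hence with a uniform constant $\gamma(m,n,D,k_0^\sharp)=:C(m,n,D)$.

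Concretely, let $E$, $(\vec{\Gamma}(x,M))_{x\in E,M>0}$, $Q_0$, $x_0$, $M_0$ be as in Theorem~\ref{thm:SF FP}, and assume that its hypothesis holds with $k^\sharp=2^{\dim\vec{\P}}$. By Theorem~\ref{thm:SF FP} itself, applied with its finiteness number $k_0^\sharp$, it suffices to verify that the hypothesis of Theorem~\ref{thm:SF FP} holds with $k^\sharp=k_0^\sharp$ and with $M_0$ replaced by $C(m,n,D)\,M_0$; the conclusion then follows with an enlarged but still admissible constant. So fix $S\subset E$ with $|S|\le k_0^\sharp$; I must exhibit a Whitney field $(\vec P^{z})_{z\in S}$ with $\norm{(\vec P^{z})_{z\in S}}_{\dot W^m(S)}\le C(m,n,D)\,M_0$ and $\vec P^{z}\in\vec{\Gamma}(z,C(m,n,D)\,M_0)$ for all $z\in S$. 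Since producing a Whitney field on a single finite set is unaffected by a global dilation of $\Rn$ --- the seminorms $\norm{\cdot}_{\dot W^m}$ and $\norm{\cdot}_{\hV}$, the estimates of Lemma~\ref{lem.WT}, and the shape-field data all transform homogeneously of the same degree under such a dilation --- we may and do assume that $S$ has diameter at most $1$.

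For each $S'\subset S$ with $|S'|\le 2^{\dim\vec{\P}}$, the assumed hypothesis gives a Whitney field $(\vec P^{z}_{S'})_{z\in S'}$ with $\norm{(\vec P^{z}_{S'})_{z\in S'}}_{\dot W^m(S')}\le M_0$ and $\vec P^{z}_{S'}\in\vec{\Gamma}(z,M_0)$ for $z\in S'$. Applying the Whitney extension operator $T^{S'}_w$ of Lemma~\ref{lem.WT}(2), I get $F^{S'}:=T^{S'}_w[(\vec P^{z}_{S'})_{z\in S'}]\in\hV$ with $\norm{F^{S'}}_{\hV}\le M_0$ and $\jet_z F^{S'}=\vec P^{z}_{S'}\in\vec{\Gamma}(z,M_0)$ for $z\in S'$. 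Then I apply Theorem~\ref{thm:main theorem} with $\mathbb X=\hV$, the set $S$, and the convex sets $\vec G(z):=M_0^{-1}\vec{\Gamma}(z,M_0)\subset\vec{\P}$, using $M_0^{-1}F^{S'}$ as the local solutions (these have $\hV$-seminorm at most $1$, and $\jet_z(M_0^{-1}F^{S'})\in\vec G(z)$ for $z\in S'$). Since $|S|\le k_0^\sharp$, the theorem produces $F_0\in\hV$ with $\norm{F_0}_{\hV}\le\gamma(m,n,D,|S|)\le C(m,n,D)$ and $\jet_z F_0\in M_0^{-1}\vec{\Gamma}(z,M_0)$ for all $z\in S$; rescaling, $F:=M_0F_0$ satisfies $\norm{F}_{\hV}\le C(m,n,D)\,M_0$ and $\jet_z F\in\vec{\Gamma}(z,M_0)$ for all $z\in S$. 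Finally, setting $\vec P^{z}:=\jet_z F$, Lemma~\ref{lem.WT}(1) gives $\norm{(\vec P^{z})_{z\in S}}_{\dot W^m(S)}\le C\norm{F}_{\hV}\le C(m,n,D)\,M_0$, while $\vec P^{z}=\jet_z F\in\vec{\Gamma}(z,M_0)\subset\vec{\Gamma}(z,C(m,n,D)\,M_0)$ by the monotonicity of the shape field in $M$ (Definition~\ref{def.shape field}). Undoing the dilation, this is the desired Whitney field.

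The only matters demanding care here are bookkeeping ones: that $\gamma$ is never evaluated at an unbounded cardinality (this is why we route everything through $k_0^\sharp$ rather than applying Theorem~\ref{thm:main theorem} to all of $E$ at once), and that the reduction to diameter at most $1$ is consistent with every estimate invoked. All of the genuine difficulty --- the adaptation of the Bierstone--Milman strategy to convex (rather than linear) data via the duality theorem of linear programming --- lies in Theorem~\ref{thm:main theorem}, which we use as a black box; the passage to Theorem~\ref{thm:SF FP improved} is soft.
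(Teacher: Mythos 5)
Your proposal is correct and follows essentially the same route as the paper: use the homogeneous ($\hV$) case of Theorem~\ref{thm:main theorem} on each $S\subset E$ with $|S|\le k_0^\sharp$ to upgrade the $2^{\dim\vec{\P}}$-point hypothesis to the hypothesis of the original Theorem~\ref{thm:SF FP} with $M_0$ enlarged by a constant factor, then invoke that theorem as a black box. Your extra bookkeeping (Whitney extension via Lemma~\ref{lem.WT}(2), normalization by $M_0$, and the dilation to diameter at most $1$) only makes explicit steps the paper leaves implicit.
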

	
	\begin{proof}[Proof of Theorem \ref{thm:SF FP improved} via Theorem \ref{thm:main theorem}]
		Take as given the hypotheses for Theorem \ref{thm:SF FP}, but with $k^\sharp=2^{\dim\vec{\P}}$. This means that for each $S'\subset E$ with $|S'|=2^{\dim\vec{P}}$, there exists $(\vec{P}^z)_{z\in S'}$ such that
		\begin{equation}
		\|(\vec{P}^z)_{z\in S'}\|_{\dot{W}^m(S')}\le M_0
		\end{equation}
		and
		\begin{equation}
		\vec{P}^z\in\vec{\Gamma}(z,M_0).
		\end{equation}
		
		Recall that in the definition of shape field, we require $\Gamma(x,M)$ be convex for all $x\in S$ and $M>0$.
		
		
		Let $S\subset E$ with $|S|\le k^\sharp$, where $k^\sharp$ is as initially stated in Theorem \ref{thm:SF FP} (and coming from \cite{FIL16} and our gradient trick for $D\ge2$). Then, the above holds for all $S'\subset S$ with $|S'|=2^{\dim\vec{\P}}$, so by the homogeneous version of Theorem \ref{thm:main theorem}, there exists $\vec{F}\in\dot{C}^m(\R^n,\R^D)$ such that
		\begin{equation}\label{eq:control on function norm}
		\|\vec{F}\|_{\dot{C}^m(\R^n,\R^D)}\le\gamma M_0
		\end{equation}
		and
		\begin{equation}
		J_x\vec{F}\in \Gamma(x,M_0)\text{ for all }x\in S.
		\end{equation}
		By \eqref{eq:control on function norm}, we have
		\begin{equation}
		\|(J_x\vec{F}^x)_{x\in S}\|_{\dot{W}^m(S)}\le C\gamma M_0.
		\end{equation}
		Thus, the hypotheses for Theorem \ref{thm:SF FP} with the $k^\sharp$ from the initial statement are satisfied.
	\end{proof}
	
	At this point, we have shown that the shape fields finiteness principle holds with an improved value of $k^\sharp$ (Theorem \ref{thm:SF FP improved}); the next step is to show that the selection problem of Theorems \ref{thm:smooth selection improved} and \ref{thm:main theorem} may be described through shape fields.
	
	\begin{proof}[Proof of Theorem \ref{thm:smooth selection improved} via Theorem \ref{thm:SF FP improved}]
		Let \begin{equation}
		\vec{\Gamma}(x,M):=\set{\vec{P}\in\vec{\P}:\snorm{\d^\alpha\vec{P}(x)}\le M, \vec{P}(x)\in K(x)}.
		\label{eq.3.8}
		\end{equation}
		It suffices to observe that $(\vec{\Gamma}(x,M))_{x\in E,M>0}$ is a $(C,1)$-convex shape field when $K(x)$ is convex for each $ x \in E $.

		\newcommand{\vG}{\Vec{\Gamma}}
		\newcommand{\ox}{{\odot_x}}
		Let $ \delta \in (0,1] $, $ x \in E $, $ M \in (0,\infty) $, $ \vP_1, \vP_2 \in \PD $, and $ Q_1, Q_2 \in \P $ be given, such that
		\begin{enumerate}[label = (C\arabic*)]
			\item $ \vP_1, \vP_2 \in \vG(x,M) $ with $ \vG(x,M) $ as in \eqref{eq.3.8};
			\item $ \snorm{\d^\alpha (\vP_1 - \vP_2)(x)} \leq M\delta^{m-\abs{\alpha}} $ for $ \abs{\alpha} \leq m-1 $;
			\item $ \abs{\d^\alpha Q_i(x)} \leq \delta^{-\abs{\alpha}} $ for $ \abs{\alpha} \leq m-1 $, $ i = 1,2 $; and
			\item $ Q_1\ox Q_1 + Q_2 \ox Q_2 = 1 $.
		\end{enumerate}
		We set
		\begin{equation*}
		\vP:= \sum_{i = 1,2}Q_i\ox Q_i \ox \vP_i.
		\end{equation*}
		We want to show that $ \vP \in \vG(x,CM) $ for some $ C = C(m,n,D) $.
		
		It is clear from (C1) and (C4) that $ \vP(x) = K(x) $. It remains to show that $ \snorm{\d^\alpha\vP(x)} \leq CM $.
		
		Using the product rule, we have, for $ \abs{\alpha} \leq m-1 $,
		\begin{equation*}
		\begin{split}
		\d^\alpha \vP(x)
		&= \sum_{i = 1,2} \sum_{\beta \leq \alpha}\sum_{\gamma\leq \beta}
		C_{\alpha,\beta,\gamma}\cdot
		\d^\gamma Q_i(x)
		\cdot
		\d^{\beta-\gamma}Q_i(x)
		\cdot
		\d^{\alpha-\beta}\vP_i(x).
		\end{split}
		\end{equation*}
		By (C4), we have $ \d^\alpha(Q_2\ox Q_2) = -\d^\alpha(Q_1\ox Q_1) $ for $ \abs{\alpha} > 0 $. It follows from (C2) and (C3) that $ \snorm{\d^\alpha \vP(x)} \leq CM $.

		
	\end{proof}

	Thus, it remains to establish Theorem \ref{thm:main theorem}. This will be done in Section \ref{sec:main proof}

	\section{Whitney norm and dual norm on clusters}
	\label{sect:W-norm}
	
	In this section, we review the data structure in \cite{BM07}, and prove a series of results that allows us to reduce the size of supports for linear functionals on $ W^m(S)^* $.
	
	\newcommand{\diam}{\mathrm{diam}}

	We write $ \abs{S} $ to denote the cardinality of a finite set $ S \subset \Rn $.

	If $ X,Y \subset \Rn $, we define
	\begin{equation*}
	\begin{split}
	\diam(X) &:= \max_{x,x' \in X}\abs{x - x'}\text{ and }\\
	\dist(X,Y) &:= \min_{x\in X, y \in Y} \abs{x-y}.
	\end{split}
	\end{equation*}

	A rooted tree (``tree'' for short) is an undirected graph with a distinct node (i.e., the root) in which any two nodes are connected by exactly one path. A leaf of a tree is any non-root node of degree one.

	\newcommand{\tree}{\mathscr{T}}

	Let $ S \subset \Rn $ be a finite set. We consider trees $ \tree $, each node of which corresponds to a subset of $S$, that satisfy the following properties.
	\begin{enumerate}[label = (T\arabic*)]
		\item\label{tree-1}The root of $ \tree $, $ R(\tree) = S $.
		\item\label{tree-2}If $ V $ is a node, then $ V $ corresponds to a subset of $ S $. The children of any node $ V $ form a partition of $ V $ (unless $ V $ is a leaf).
		\item\label{tree-3}The nodes of any given level correspond to a partition of $ S $. The leaves of $ \tree $ are the singletons $ \set{x} $, with $ x \in S $.
		\item\label{tree-4}The number of nodes of level $ \ell = 0,1,\cdots $ is a strictly increasing function of $ \ell $.
	\end{enumerate}

	A collection of points
	\begin{equation*}
	\underline{x} = \set{x_V \in S : V \in \tree \setminus leaves(\tree)}
	\end{equation*}
	is called a \underline{set of reference points} for $ \tree $ if, for each $ V $, $ x_V \in V $ and $ x_V = x_W $ for some child $ W $ of $ V $. We adopt the convention $ x_{\set{x}} := x $ in the last level.
	
	Let $ \underline{x} $ be a set of reference points of $ \tree $. For each $ V \in \tree \setminus leaves(\tree) $, define
	\begin{equation*}
	V(\underline{x}) := \set{x_W : W \text{ is a child of } V}\,.
	\end{equation*}
	
	Suppose $ x \in S \setminus \set{x_S} $. Then there is a unique node $ V $ of highest level such that $ x \in V \setminus \set{x_V} $. We set
	\begin{equation}
	ref(x) := x_V.
	\label{eq.ref-x}
	\end{equation}
	We also set
	\begin{equation}
	U(x) := \text{ the oldest ancestor of } U \text{ such that } x = x_U.
	\label{eq.U-x}
	\end{equation}
	
	A \underline{trunk} $ T $ of $ \tree $ denotes a directed path from the root $ S $ to level $ height(\tree)-1 $. Let $ T $ be a trunk of $ \tree $. We define the set of \underline{branch nodes} $ B(T) $ as the set of nodes of $ \tree $ which are adjacent to $ T $.
	
	Recall that the height of a tree is defined to be the number of edges on the longest downward path between the root and a leaf. We define the notion of ``clustering'' as follows.
	
	\begin{definition}\label{def.clustering}
		Let $ S \subset \Rn $ be finite. Let $ \tree $ be a tree of subsets of $ S $ that satisfies {\rm\ref{tree-1}} to {\rm{\ref{tree-4}}}. We say that $ \tree $ is a \underline{clustering} of $ S $ if $ \tree $ has a set of reference points $ \underline{x} = \set{x_V} $ such that for each $ l = 0, 1, \cdots, height(\tree) - 1 $, the set
		\begin{equation*}
		\Pi := \set{V(\underline{x}) : level(V) = \ell}
		\end{equation*}
		forms a partition of
		\begin{equation*}
		\set{a_W : level(W) = \ell+1}
		\end{equation*}
		satisfying
		\begin{equation}
		\begin{split}
		\abs{x - y} &\geq c_{\underline{x}}\cdot \diam(S) \text{ for each } S \in \Pi \text{, } x \neq y \text{ in } S\text{, and}
		\\
		\dist(S,S') &\geq c_{\underline{x}} \cdot \diam(S) \text{ for all } S, S' \in \Pi, S \neq S'.
		\end{split}
		\label{eq.clust}
		\end{equation}
		Here, $ 0 < c_{\underline{x}}(n,\abs{S}) \leq 1 $ is called the \underline{clustering constant}.
	\end{definition}
	
	\newcommand{\clu}{\mathcal{C}}
	We write $ \clu = \clu(\tree,\underline{x}) $ to denote a clustering $ \tree $ of $ S $ together with a set of reference points $ \underline{x} $.

	\begin{lemma}[Lemma 2.4 of \cite{BM07}]\label{lem.clustering-exist}
		Given a finite set $ S \subset \Rn $, we can always find a clustering $ \tree $ of $ S $ such that for any set of reference points $ \underline{x} $ for $ \tree $, the condition of Definition \ref{def.clustering} is satisfied with some $ 0 < c_{\underline{x}} \leq 1 $, where $ c_{\underline{x}} $ depends only on $ m,n, $ and $ \#(S) $.
	\end{lemma}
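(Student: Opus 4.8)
The plan is to build $\tree$ recursively by a single-linkage (``gap-scale'') decomposition of $S$ and to read the clustering constant off a pigeonhole argument on the scales at which that decomposition changes. Write $N := \#(S)$; the construction will produce a constant depending only on $N$ (so, in particular, on $m$ and $n$ as well -- the argument uses nothing about $\Rn$ beyond the triangle inequality). Everything rests on a one-step splitting lemma: for every finite $Y \subset \Rn$ with $\#Y = p \ge 2$ there exist a scale $\rho > 0$, a large factor $\Lambda = \Lambda(p)$ (one may take $\Lambda(p) = p$), and a partition $Y = C_1 \sqcup \cdots \sqcup C_k$ with $k \ge 2$ such that $\diam(C_i) \le (p-1)\rho$ for each $i$, $\dist(C_i, C_j) > \Lambda\rho$ for $i \ne j$, and $\rho \ge \Lambda^{-p}(p-1)^{-1}\diam(Y)$.

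To prove the one-step lemma, consider single-linkage clustering of $Y$: for $t \ge 0$, the clusters at level $t$ are the connected components of the graph on $Y$ joining points at distance $\le t$. This family is coarsening in $t$, equals $\{Y\}$ for $t \ge \diam(Y)$ and is all singletons for small $t$, and it changes at most $p-1$ times -- the number of clusters starts at $p$, ends at $1$, and strictly decreases at each change -- at scales $\sigma_1 > \cdots > \sigma_s$, $s \le p-1$, each of which is some pairwise distance of $Y$; since two points of one cluster at level $t$ are joined by a chain of $\le p-1$ edges of length $\le t$, we get $\diam(Y)/(p-1) \le \sigma_1 \le \diam(Y)$. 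Among the $p$ pairwise disjoint intervals $(\sigma_1\Lambda^{-j}, \sigma_1\Lambda^{1-j}]$, $j = 1, \dots, p$, the $\le p-1$ change scales can meet at most $p-1$ of them, so some $(\rho, \Lambda\rho]$ with $\rho = \sigma_1\Lambda^{-j_0}$, $1 \le j_0 \le p$, contains no change scale; take $C_1, \dots, C_k$ to be the clusters of $Y$ at level $\rho$. Then $k \ge 2$ since $\rho < \sigma_1$ lies below the largest change scale, $\diam(C_i) \le (p-1)\rho$ by the chain bound, $\rho \ge \sigma_1\Lambda^{-p} \ge \Lambda^{-p}(p-1)^{-1}\diam(Y)$, and $\dist(C_i, C_j) > \Lambda\rho$ for $i \ne j$ because the clustering is unchanged throughout $[\rho, \Lambda\rho]$, so a pair realizing $\dist(C_i, C_j) \le \Lambda\rho$ would force $C_i$ and $C_j$ into a common cluster already at level $\rho$. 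With $\Lambda(p) = p \ge p-1$ this also gives $\dist(C_i, C_j) > \diam(C_l)$ for all $i, j, l$.

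Next I would assemble $\tree$: starting from the root $S$, apply the one-step lemma to each non-singleton node to define its children, and give each singleton a single child equal to itself; after finitely many steps all branches reach the level $h = height(\tree)$ at which every cluster is a singleton, and the truncation at level $h$ is $\tree$. Then \ref{tree-1}--\ref{tree-3} are immediate, and \ref{tree-4} holds because below level $h$ at least one node is non-singleton and splits into $\ge 2$ children, so the node count strictly increases. A set of reference points $\underline{x}$ is obtained by choosing $x_{C_i} \in C_i$ arbitrarily at each split, subject only to assigning $x_V$ to the child containing it (and $x_{\{x\}} = x$); since the separation estimates above hold for \emph{all} choices of points in the clusters, the conclusion will hold for \emph{every} such $\underline{x}$, as the lemma demands. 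Verifying Definition \ref{def.clustering} is then routine with the single constant $c_{\underline{x}} := \Lambda(N)^{1-N}(N-1)^{-1}$ (which, since $p \mapsto \Lambda(p)^{1-p}(p-1)^{-1}$ is decreasing, is $\le \Lambda(p)\cdot\Lambda(p)^{-p}(p-1)^{-1}$ for all $p \le N$): for a level-$\ell$ node $V$ split at scale $\rho_V$, the intra-group bound follows from $\dist(C_i, C_j) > \Lambda(\#V)\rho_V \ge c_{\underline{x}}\diam(V) \ge c_{\underline{x}}\diam(V(\underline{x}))$ using $\rho_V \ge \Lambda(\#V)^{-\#V}(\#V-1)^{-1}\diam(V)$, and the inter-group bound, for distinct level-$\ell$ groups coming from nodes $V, V'$, follows by passing to their youngest common ancestor $W$, where $V, V'$ lie in distinct children $D_a \ne D_b$ of $W$, whence $\dist(V(\underline{x}), V'(\underline{x})) \ge \dist(D_a, D_b) > \Lambda(\#W)\rho_W \ge (\#W-1)\rho_W \ge \diam(D_a) \ge \diam(V(\underline{x}))$. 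As every cluster met in the construction has $\le N$ points, $c_{\underline{x}} = c(N)$ works at every level and for every valid $\underline{x}$.

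The step I expect to be the main obstacle is the pigeonhole choice of $\rho$: one needs the gap scale to be \emph{simultaneously} bounded below by a fixed fraction of $\diam(Y)$ \emph{and} surrounded by a wide multiplicative gap $(\rho, \Lambda\rho]$ empty of change scales. The former is what keeps the clustering constant from degrading as the recursion descends through up to $N$ levels, and the latter is precisely what converts ``no merge happens here'' into the genuine separation $\dist(C_i, C_j) > \Lambda\rho$; reconciling both against a budget of only $p-1$ change scales is the reason for splitting the range just below $\sigma_1$ into $p$ intervals decaying by the factor $\Lambda(p)$. Once this is in hand, the rest -- the tree axioms and the bookkeeping with the geometrically decaying diameters -- is routine.
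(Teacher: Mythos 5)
The paper itself does not prove this lemma---it is quoted as Lemma 2.4 of \cite{BM07}---and your single-linkage construction is correct and is essentially the same argument: a pigeonhole over multiplicative scale windows below the top merging scale produces a threshold $\rho$ that is both a definite fraction of the diameter and sits under an empty window $(\rho,\Lambda\rho]$, which yields components of small diameter separated by $>\Lambda\rho$, and recursing gives the tree with a constant depending only on $\#S$. Your verification also correctly delivers the stronger quantifier in the lemma (the estimates hold for \emph{every} choice of reference points, since they are stated for the clusters rather than for particular representatives), so the write-up is complete as it stands.
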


	\newcommand{\trinorm}[1]{|||#1|||}
	
	\begin{definition}\label{def.clu-norm}
		Let $ \clu = \clu(\tree,\underline{x}) $ be a clustering of $ S $ with a set of reference points $\underline{x}$. We define the $ C^m $-clustering norm $ \norm{\cdot}_{\clu} $ on $ W^m(S) $ to be
		\begin{equation*}
		\norm{(\vP^x)_{x \in S}}_{\clu} := \max\set{ \trinorm{(\vP^x)_{x \in S}}_{\clu}\,\,,\,\,\norm{\vP^{x_S}}_{x_S}}\,,
		\end{equation*}
		where
		\begin{equation*}
		\trinorm{\vec{P}}_{\clu} := \max_{\substack{x \in S \setminus\set{x_S}\\
				y = ref(x)\\
				\abs{\alpha}\leq m
		}}
		\frac{\snorm{\d^\alpha (\vP^x - \vP^y)(x)}}{\abs{x-y}^{m-\abs{\alpha}}}
		\,\,\text{ and }\,\,
		\norm{\vP^{x_S}}_{x_S} :=  \max_{\abs{\alpha}\leq m}\snorm{\d^\alpha \vP^{x_S}(x_S)}
		\end{equation*}
	\end{definition}

	\begin{lemma}[Proposition 3.2 of \cite{BM07}]\label{lem.clust-equiv}
		Let $ S \subset \Rn $ be a finite set, and let $ \clu = \clu(\tree,\underline{x}) $ be a clustering of $ S $ with a set of reference points $\underline{x}$ and clustering constant $ c_{\underline{x}} $. Then
		\begin{equation}
		\norm{(\vec{P}^x)_{x \in S}}_{W^m(S)} \leq C \norm{(\vec{P}^x)_{x \in S}}_{\clu}.
		\label{eq.lem.3.9}
		\end{equation}
		Here, $ C = C(c_{\underline{x}},m,n,\abs{S},B) $, where $ B $ is an upper bound on $ \diam(S) $.
	\end{lemma}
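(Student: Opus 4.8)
The plan is to bound the two maxima in the definition \eqref{eq.W-1} of $\norm{\cdot}_{W^m(S)}$ separately against $N:=\norm{(\vec{P}^x)_{x\in S}}_{\clu}$, by telescoping along reference chains and using the exact Taylor formula for polynomials to change base points. The only computational ingredient is that for $P\in\P$, $a,b\in\Rn$, and $\abs{\alpha}\le m-1$,
\begin{equation*}
\d^\alpha P(b)=\sum_{\abs{\beta}\le m-1-\abs{\alpha}}\frac{1}{\beta!}\,\d^{\alpha+\beta}P(a)\,(b-a)^\beta,
\end{equation*}
so that $\snorm{\d^\alpha P(b)}\le C(m,n)\sum_{\abs{\beta}\le m-1-\abs{\alpha}}\abs{a-b}^{\abs{\beta}}\snorm{\d^{\alpha+\beta}P(a)}$; applied componentwise to a jet $\vec{P}^z$ or to a difference $\vec{P}^z-\vec{P}^{z'}$, this transfers derivative bounds from $z$ to any other point at the price of a factor polynomial in the displacement.

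First I would establish the absolute bound $\snorm{\d^\alpha\vec{P}^x(x)}\le C(m,n,\abs{S},B)\,N$ for every $x\in S$ and $\abs{\alpha}\le m-1$, which handles the first maximum in \eqref{eq.W-1}. From Definition \ref{def.clu-norm}, $\max_{\abs{\alpha}\le m-1}\snorm{\d^\alpha\vec{P}^{x_S}(x_S)}\le N$. For general $x$, run its reference chain $x=y_0,\ y_1=ref(y_0),\dots,y_L=x_S$; since applying $ref$ strictly lowers the level of the witnessing node in \eqref{eq.ref-x}, we have $L\le height(\tree)\le\abs{S}-1$. Writing $\vec{P}^x=\vec{P}^{x_S}+\sum_{i}(\vec{P}^{y_i}-\vec{P}^{y_{i+1}})$ and noting $\snorm{\d^\gamma(\vec{P}^{y_i}-\vec{P}^{y_{i+1}})(y_i)}\le\trinorm{\vec{P}}_{\clu}\,\abs{y_i-y_{i+1}}^{m-\abs{\gamma}}\le N(1+B)^m$ because $\abs{y_i-y_{i+1}}\le\diam(S)\le B$, the polynomial transfer moves each base point down to $x$ with a factor depending only on $m,n,B$, and summing the at most $\abs{S}$ terms gives the claim. (The same argument shows $\snorm{\d^\alpha\vec{P}^{x_V}(x_V)}\le CN$ for the reference point of every node $V$.)

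The heart of the matter is the difference bound: for $x\ne y$ in $S$ and $\abs{\alpha}\le m-1$,
\begin{equation*}
\snorm{\d^\alpha(\vec{P}^x-\vec{P}^y)(x)}\le C(c_{\underline{x}},m,n,\abs{S},B)\,N\,\abs{x-y}^{m-\abs{\alpha}}.
\end{equation*}
Let $U$ be the least common ancestor of $\{x\}$ and $\{y\}$ in $\tree$. Since the reference chain of a point is exactly the list of reference points of the ancestors of that point's leaf, the chains of $x$ and of $y$ both pass through $x_U$ and first coincide there; concatenating the two chains up to $x_U$ yields a path $x=w_0,w_1,\dots,w_r=y$ with $r\le2\,height(\tree)\le2\abs{S}$ in which every consecutive pair is a reference pair (one is $ref$ of the other) and every $w_i$ lies in $U$. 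Granting the geometric estimate $\abs{w_i-w_{i+1}}\le C(c_{\underline{x}},\abs{S})\,\abs{x-y}$ for $0\le i<r$, I telescope $\d^\alpha(\vec{P}^x-\vec{P}^y)(x)=\sum_i\d^\alpha(\vec{P}^{w_i}-\vec{P}^{w_{i+1}})(x)$: each summand is, in each component, a polynomial of degree $\le m-1$ whose order-$\abs{\gamma}$ derivatives at $w_i$ (or $w_{i+1}$) are at most $N\,\abs{w_i-w_{i+1}}^{m-\abs{\gamma}}\le N(C\abs{x-y})^{m-\abs{\gamma}}$, and re-basing to $x$ by polynomial transfer costs powers of $\abs{x-w_i}\le\sum_{j}\abs{w_j-w_{j+1}}\le2\abs{S}\,C\abs{x-y}$; every resulting monomial then carries the factor $\abs{x-y}^{m-\abs{\alpha}}$, so summing the $O(\abs{S})$ terms and dividing by $\abs{x-y}^{m-\abs{\alpha}}$ controls the second maximum in \eqref{eq.W-1}.

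Combining the two estimates with \eqref{eq.W-1} yields $\norm{(\vec{P}^x)_{x\in S}}_{W^m(S)}\le C(c_{\underline{x}},m,n,\abs{S},B)\,N$, as asserted. The one genuinely delicate step — and the only place the separation conditions \eqref{eq.clust} of Definition \ref{def.clustering} are used — is the geometric estimate $\abs{w_i-w_{i+1}}\lesssim\abs{x-y}$. The plan for it is: first prove, by induction down $\tree$, that $\diam(V)\le C(c_{\underline{x}},\abs{S})\diam(V(\underline{x}))$ for every node $V$, each descent to a child costing a factor $c_{\underline{x}}^{-1}$ because distinct reference points lying in a common member of $\Pi$ are $c_{\underline{x}}$-separated relative to its diameter and the tree depth is at most $\abs{S}-1$; then, using that the reference points of the two children of $U$ containing $x$ and $y$ are distinct points of $U(\underline{x})$ — hence $\ge c_{\underline{x}}\diam(U(\underline{x}))$ apart — together with the separations at the levels below $U$, deduce that the distance between those two children, and therefore $\abs{x-y}$, is bounded below by $c_{\underline{x}}^{O(\abs{S})}\diam(U)$, whence every step length (all $w_i\in U$) is $\lesssim\abs{x-y}$. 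This is the geometric content of Proposition 3.2 of \cite{BM07}, which we are invoking; transcribing its proof into the present notation is the part I expect to demand the most care.
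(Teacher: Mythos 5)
The paper itself offers no proof of this lemma: it is imported verbatim as Proposition 3.2 of \cite{BM07}, so there is no internal argument to measure yours against. Your outline is, in substance, the Bierstone--Milman argument: control the absolute part of \eqref{eq.W-1} by telescoping along the reference chain from $x$ to $x_S$ and re-basing polynomials via Taylor's formula (at most $\abs{S}$ steps, each costing a factor depending on $m,n,B$), and control the difference quotient for $x\neq y$ by telescoping along the two reference chains up to the reference point of the least common ancestor $U$, which indeed stay inside $U$ and have length bounded by the height of the tree. Both reductions are correct as you state them, and the whole burden falls, as you say, on the geometric estimate $\mathrm{diam}(U)\le C(c_{\underline{x}},\abs{S})\,\abs{x-y}$.

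Two cautions about your sketch of that estimate. First, the intermediate claim $\mathrm{diam}(V)\le C\,\mathrm{diam}(V(\underline{x}))$ fails as literally stated for nodes with a single child (then $V(\underline{x})$ is a singleton while $V$ need not be); run the induction over branching nodes, or collapse single-child chains, using that a reference point of a node persists as a reference point of some descendant at every lower level and can therefore be located inside the relevant member of the partition $\Pi$ at the level where branching next occurs. Second, and more seriously, the lower bound on $\abs{x-y}$ cannot come from one triangle inequality at the level of $U$: the hypotheses \eqref{eq.clust} only give $\mathrm{diam}(W_x)\le C(c_{\underline{x}},\abs{S})\,\mathrm{diam}(U(\underline{x}))$ with $C\ge 1$, so subtracting the diameters of the two children from $\abs{x_{W_x}-x_{W_y}}\ge c_{\underline{x}}\,\mathrm{diam}(U(\underline{x}))$ yields nothing. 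What does work---and is presumably what you intend by ``together with the separations at the levels below $U$''---is an upward induction on levels: with $A_\ell\ni x$ and $B_\ell\ni y$ the level-$\ell$ nodes, one has $x_{A_{\ell+1}}\in A_\ell(\underline{x})$, $x_{B_{\ell+1}}\in B_\ell(\underline{x})$, and the separation conditions at level $\ell$ give
\begin{equation*}
\abs{x_{A_\ell}-x_{B_\ell}}\le\brac{1+2c_{\underline{x}}^{-1}}\abs{x_{A_{\ell+1}}-x_{B_{\ell+1}}}
\end{equation*}
as long as $A_\ell\neq B_\ell$; starting from $\abs{x_{A_h}-x_{B_h}}=\abs{x-y}$ at the bottom and iterating up to the children $W_x,W_y$ of $U$, this yields $c_{\underline{x}}\,\mathrm{diam}(U(\underline{x}))\le\abs{x_{W_x}-x_{W_y}}\le\brac{1+2c_{\underline{x}}^{-1}}^{\abs{S}}\abs{x-y}$, which combined with the (repaired) diameter claim gives the estimate. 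With these repairs your outline is a complete proof; it is the proof of \cite{BM07} rather than an alternative to it, which is consistent with the paper's decision simply to cite that result.
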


	Next we characterize linear functionals on clusters.

	Let $ S \subset \Rn $ be finite, and let $ \xi = (\xi_x)_{x \in S} \in W^m(S)^* $.
	
	Let $ \clu(\tree,\underline{x}) $ be a clustering of $ S $. For each node $ V \in \tree $, we define $ \xi_V $ by the formula
	\begin{equation}
	\xi_V := \sum_{x \in V} \xi_x\,.
	\label{eq.xi-V}
	\end{equation}
	
	\begin{lemma}[Lemma 5.1 of \cite{BM07}]
	    Let $ S \subset \Rn $ be a finite set, and let $ \clu = \clu(\tree,\underline{x}) $ be a clustering of $ S $ with a set of reference points $\underline{x}$ and clustering constant $ c_{\underline{x}} $. Let $ref(x)$ and $U(x)$ be as in \eqref{eq.ref-x} and \eqref{eq.U-x}, respectively.
	    The action of $ \xi \in W^m(S)^* $ has the form:
	\begin{equation}
	\xi[(\vec{P}^x)_{x \in S}] = \sum_{x \in S \setminus \set{x_S}} \xi_{U(x)}(\vP^x - \vP^{ref(x)}) + \xi_S(\vP^{x_S})\,.
	\label{eq.action}
	\end{equation}
	\end{lemma}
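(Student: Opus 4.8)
The plan is to establish \eqref{eq.action} as an identity between two linear functionals on $W^m(S)$; although the statement is phrased for a clustering, neither the formula nor its proof involves the metric condition \eqref{eq.clust}, only the combinatorial tree structure of $\tree$ together with its reference points $\underline{x}$. First I would peel off the base point: since $\sum_{x\in S}\xi_x=\xi_S$ by \eqref{eq.xi-V}, linearity gives
\[
\xi[(\vec{P}^x)_{x\in S}]=\sum_{x\in S}\xi_x(\vec{P}^x)=\xi_S(\vec{P}^{x_S})+\sum_{x\in S\setminus\{x_S\}}\xi_x\big(\vec{P}^x-\vec{P}^{x_S}\big),
\]
so the task reduces to rewriting $\sum_{x\neq x_S}\xi_x(\vec{P}^x-\vec{P}^{x_S})$ in the claimed form.

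Next I would telescope each difference $\vec{P}^x-\vec{P}^{x_S}$ along the chain of iterated reference points $x, ref(x), ref(ref(x)), \cdots$. This rests on two structural facts about the map $ref$ of \eqref{eq.ref-x}: (i) for $x\in S\setminus\{x_S\}$ one has $ref(x)=x_V$, where $V$ is the parent of $U(x)$ (see \eqref{eq.U-x}); since $V$ itself carries the reference point $ref(x)$, this forces $U(ref(x))\supsetneq U(x)$, so $U(x)\subsetneq U(ref(x))\subsetneq U(ref^2(x))\subsetneq\cdots$ is a strictly increasing chain of nodes, hence finite, and it can terminate only upon reaching $x_S$ (because $U(z)=S$ exactly when $z=x_S$); and (ii) $x\in U(ref^j(x))$ for every $j\ge0$, proved by an immediate induction on $j$ using (i). Granting these, $\vec{P}^x-\vec{P}^{x_S}=\sum_{j\ge0}\big(\vec{P}^{ref^j(x)}-\vec{P}^{ref^{j+1}(x)}\big)$ is a finite telescoping sum; substituting and regrouping the resulting double sum by ``edges'' $z\mapsto ref(z)$ (with $z=ref^j(x)\in S\setminus\{x_S\}$), the coefficient of $\vec{P}^z-\vec{P}^{ref(z)}$ becomes $\sum\{\xi_x : z=ref^j(x)\text{ for some }j\ge0\}$.

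The heart of the matter is then the combinatorial identity
\[
\{x\in S : z=ref^j(x)\text{ for some }j\ge0\}=U(z)\qquad(z\in S),
\]
which converts that coefficient into $\sum_{x\in U(z)}\xi_x=\xi_{U(z)}$ and, together with the base-point term, yields \eqref{eq.action}. The inclusion ``$\subseteq$'' is fact (ii) above. For ``$\supseteq$'', given $x\in U(z)$ I would examine the chain of ancestors of the leaf $\{x\}$: the node $U(z)$ lies on this chain (it contains $x$) and carries reference point $z=x_{U(z)}$, so it suffices to show that the reference points encountered along the chain occur in consecutive blocks whose successive values are precisely $x=ref^0(x), ref^1(x), ref^2(x), \cdots$ (the block with value $ref^j(x)$ being topped by $U(ref^j(x))$, which by (ii) does lie on this very chain). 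I expect this block structure to be the main obstacle; the point behind it is that once the reference point leaves a value as one moves up the chain it never returns to it, since the reference point of a node is inherited from a single child and each point lies in a unique child. Everything else is bookkeeping. A variant that sidesteps the telescoping is induction on the number of nodes of $\tree$, splitting $S$ into the children $C_1,\cdots,C_r$ of the root (with $x_{C_1}=x_S$), applying the formula to the subtree rooted at each $C_i$, and recombining via $\sum_i\xi_{C_i}=\xi_S$; but the same facts about $ref$ and $U$ recur there, now in the form that $ref$ and $U$ restrict correctly to each subtree and take the values $ref(x_{C_i})=x_S$ and $U(x_{C_i})=C_i$ at the child--root interface.
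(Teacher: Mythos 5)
Your argument is correct, and it is worth noting that this paper does not actually prove the lemma: it is imported verbatim as Lemma 5.1 of \cite{BM07}, so there is no in-paper proof to compare against. Your route --- peel off $\xi_S(\vec{P}^{x_S})$, telescope $\vec{P}^x-\vec{P}^{x_S}$ along the chain $x,\,ref(x),\,ref^2(x),\dots$, and regroup by edges $z\mapsto ref(z)$ --- is the standard Abel-summation-over-the-tree argument, and the one genuinely nontrivial point, the identity $\{x\in S: z=ref^j(x)\text{ for some }j\ge 0\}=U(z)$, is correctly isolated and correctly justified: the inclusion $\subseteq$ is your fact (ii), and for $\supseteq$ the block structure of reference points along the ancestor chain of $\{x\}$ follows from exactly the downward-inheritance observation you give (if a node $B$ has $x_B=w$, then the unique child of $B$ containing $w$ also has reference point $w$, so every ancestor of $\{w\}$ below $B$ has reference point $w$; hence the set of ancestors of $\{x\}$ with a given reference value is a consecutive block, topped by $U$ of that value, and the value just above the block of $ref^j(x)$ is $ref^{j+1}(x)$ by the definition \eqref{eq.ref-x} applied at $ref^j(x)$). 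Two small bookkeeping points you implicitly use and could state: the iterated chain never repeats a point (since $U(ref^j(x))$ is strictly increasing), so each $z$ is counted once per $x$ in the regrouping; and $x_S\notin U(z)$ for $z\neq x_S$, so restricting the outer sum to $S\setminus\{x_S\}$ loses nothing. With those remarks the proof is complete; the inductive variant over subtrees you mention would work equally well.
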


	As a consequence, we can compute the cluster dual norm using the formula:
	\begin{equation}
	\begin{split}
	\norm{\xi}_{\clu^*} &=
	\sum_{\substack{x \in S \setminus \set{x_S}\\\abs{\alpha}\leq m\\1\le j\le D}} \abs{x - ref(x)}^{m-\abs{\alpha}}\abs{\xi_{U(x)}\brac{ 0,...,0,\frac{(\cdot - x)^\alpha}{\alpha !},0,...,0 }} \\
	&\,\,\,\,\,\,\,\,\,\,\,\,\,\,\,\,\,\,\,\,
	+ \sum_{\substack{\abs{\alpha}\leq m\\1\le j\le D}}\abs{\xi_{S}\brac{0,...,0,\frac{(\cdot - x_S)^{\alpha}}{\alpha!},0,...,0}}\,.
	\end{split}
	\label{eq.cluster-dual}
	\end{equation}
	
	In the above, the nontrivial expression in the arguments of $\xi_S$ and $\xi_{U(x)}$ are in the $j$-th coordinates.

	\begin{lemma}\label{lem.red-1}
		Let $ S \subset \Rn $ be a finite set, and let $ \Phi : S \times \PD^* \to \R $ be a function that is positively homogeneous with degree one on the fibers and vanishes along the zero section. Let $ \tree $ be a clustering of of $ S $. Let $ \xi \in W^m(S)^* $. For each $ V \in \tree $, define $ \xi_V $ as in \eqref{eq.xi-V}. Define
		\begin{equation*}
		\Phi(\xi_V) :=\sum_{x \in V}\Phi_x(\xi_x)\,,
		\end{equation*}
		and set $ \bar{\xi}_V := (\xi_V, \Phi(\xi_V)) \in \PD^* \oplus \R $. Let $ T $ be a trunk of $ \tree $, and let $ \Xi(T) $ denote the linear span of $ \set{\bar{\xi}_V : V \in B(T)} $ in $ \PD^* \oplus \R $. Assume
		\begin{equation*}
		\dim \Xi(T) < \#\brac{B(T)}.
		\end{equation*}
		Then there exists $ \eta \in W^m(S)^* $ such that the following hold.
		\begin{enumerate}
			\item For all $ V \in \tree \setminus T $, $ \eta_V = \theta_V \xi_V $ for some $ 0 \leq \theta_V \leq 2 $.
			\item For some $ V \in B(T) $, $ \eta_x \equiv 0 $ for all $ x \in V $.
			\item $ \sum_{x \in S}\xi_x = \sum_{x \in S}\eta_x $ as elements of $ \PD^* $.
			\item $ \sum_{x \in S}\Phi_x(\xi_x) = \sum_{x \in S}\Phi_x(\eta_x) $.
		\end{enumerate}
		Moreover, for such $ \eta $, we have
		\begin{equation}
		\norm{\eta}_{\clu^*} \leq 2\norm{\eta}_{\clu^*}
		\label{eq.equiv-dual-cluster}
		\end{equation}
	\end{lemma}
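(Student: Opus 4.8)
The plan is to read the dimension hypothesis $\dim\Xi(T)<\#(B(T))$ as a nontrivial linear dependence among the lifted functionals $\{\bar{\xi}_V\}_{V\in B(T)}$, and then rescale $\xi$ along that dependence until one branch node switches off, all the while keeping the total $\PD^*$-mass $\sum_x\xi_x$ and the total $\Phi$-mass $\sum_x\Phi_x(\xi_x)$ fixed.

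First I would record the combinatorial fact that $B(T)$ is a partition of $S$. Writing $T=(S=T_0,T_1,\dots,T_{h-1})$ with $h=height(\tree)$, property \ref{tree-2} gives $T_j=T_{j+1}\sqcup\bigsqcup\{\text{branch children of }T_j\}$ for $0\le j<h-1$, while the children of the last trunk node $T_{h-1}$ are the singleton leaves it contains, which are themselves branch nodes; iterating yields $S=\bigsqcup_{V\in B(T)}V$. Hence each $x\in S$ lies in a unique $V\in B(T)$, which I denote $V(x)$; moreover every node $W\in\tree\setminus T$ is contained in a single $V\in B(T)$ (namely the branch node at which the path from $W$ to the root first leaves $T$), while no trunk node is contained in a single branch node.

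Next, since $\{\bar{\xi}_V\}_{V\in B(T)}$ spans a subspace of dimension $<\#(B(T))$, there are scalars $(c_V)_{V\in B(T)}$, not all zero, with $\sum_{V\in B(T)}c_V\bar{\xi}_V=0$; equivalently $\sum_V c_V\xi_V=0$ in $\PD^*$ and $\sum_V c_V\Phi(\xi_V)=0$. Put $C:=\max_V|c_V|>0$, $t_0:=1/C$, choose $V_0$ with $|c_{V_0}|=C$, and let $\epsilon:=-\operatorname{sign}(c_{V_0})\in\{\pm1\}$, so $\epsilon c_{V_0}=-C$. Set $\theta_V:=1+\epsilon t_0 c_V$; then $\theta_V\in[\,1-t_0C,\,1+t_0C\,]=[0,2]$ for every $V\in B(T)$, and $\theta_{V_0}=0$. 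Finally define $\eta=(\eta_x)_{x\in S}$ by $\eta_x:=\theta_{V(x)}\,\xi_x$. I would then verify (1)--(4): for $W\in\tree\setminus T$ contained in $V\in B(T)$, summing over $x\in W$ gives $\eta_W=\theta_V\xi_W$, which is (1) with $\theta_W:=\theta_V\in[0,2]$, and (2) holds because $\theta_{V_0}=0$; for (3), $\sum_{x\in S}\eta_x=\sum_V\theta_V\xi_V=\sum_V\xi_V+\epsilon t_0\sum_V c_V\xi_V=\sum_{x\in S}\xi_x$ by Step 1 and the first component of the dependence; and for (4), positive homogeneity of degree one of each $\Phi_x$, together with $\theta_{V(x)}\ge 0$, gives $\Phi_x(\eta_x)=\theta_{V(x)}\Phi_x(\xi_x)$, hence $\sum_{x\in S}\Phi_x(\eta_x)=\sum_V\theta_V\Phi(\xi_V)=\sum_{x\in S}\Phi_x(\xi_x)$ by the second component of the dependence.

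For the closing dual-norm inequality I would use the explicit formula \eqref{eq.cluster-dual}: property (3) gives $\eta_S=\xi_S$, so the $x_S$-terms are unchanged, and for $x\in S\setminus\{x_S\}$ the node $U(x)$ of \eqref{eq.U-x} is contained in a single $V\in B(T)$ (this is where the clustering structure enters), so $\eta_{U(x)}=\theta_V\xi_{U(x)}$ with $0\le\theta_V\le2$; consequently each summand of $\norm{\eta}_{\clu^*}$ is at most twice the matching summand of $\norm{\xi}_{\clu^*}$, which yields \eqref{eq.equiv-dual-cluster}. The one genuinely delicate point is the two-sided bound $0\le\theta_V\le 2$: a one-sided deformation $\theta_V=1+tc_V$ generically violates the upper bound, so the sign $\epsilon$ must be picked to drive the coefficient of largest modulus to zero \emph{from the correct side}. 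Everything else is bookkeeping — it is precisely the partition structure of $B(T)$ together with the positive homogeneity of $\Phi$ that lets the rescaled array $\eta$ inherit the mass identities (3) and (4).
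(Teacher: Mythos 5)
Your argument is essentially the paper's proof: the paper takes the linear dependence among $\{\bar{\xi}_W : W \in B(T)\}$, selects $V\in B(T)$ with $\bar{\xi}_V=\sum_{W\in B(T)\setminus V}\lambda_{VW}\bar{\xi}_W$ and all $|\lambda_{VW}|\le 1$ (which is exactly your normalization by the coefficient of largest modulus, with your sign choice $\epsilon$ and scale $t_0=1/C$), sets the multiplier to $0$ on $V$, to $1+\lambda_{VW}$ on each other $W\in B(T)$, and to $1$ elsewhere, and then verifies (1)--(4) via positive homogeneity and the nonnegativity of the multipliers, and the dual-norm bound via \eqref{eq.cluster-dual} --- precisely your steps, with your write-up being somewhat more explicit (e.g., the observation that $B(T)$ partitions $S$). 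One caveat: your claim that $U(x)$ lies inside a single branch node for every $x\in S\setminus\{x_S\}$ tacitly assumes the reference points are constant along the trunk; with the paper's literal definition of a trunk as an arbitrary root-to-level-$(height(\tree)-1)$ path, one can have $U(x)=T_j$ a trunk node with $x=x_{T_j}\neq x_S$, and then $\eta_{U(x)}$ need not be a $[0,2]$-multiple of $\xi_{U(x)}$; the paper's own one-line justification of \eqref{eq.equiv-dual-cluster} from (1) and (3) elides the same point, and it is repaired by taking the trunk compatible with the reference points (so $x_{T_j}=x_S$ for all $j$), as in \cite{BM07}, or by treating such trunk nodes separately --- so this is a shared imprecision rather than a gap you introduced.
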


	\begin{proof}
		We modify the proof of Lemma 6.1 of \cite{BM07}.
		
		Since $ \dim \Xi(T) \leq \#\brac{B(T)} $, $ \set{\bar{\xi}_W, W \in B(T)} $ is not linearly independent, so we may find $ V \in B(T) $ such that
		\begin{equation*}
		\bar{\xi}_V = \sum_{W \in B(T)\setminus V} \lambda_{VW}\cdot \bar{\xi}_W
		\text{ where all } \abs{\lambda_{VW}} \leq 1\,.
		\end{equation*}
		For each $ x \in S $, we set $ \eta_x := \theta_x \cdot \xi_x $, where
		\begin{equation*}
		\theta_x :=
		\begin{cases}
		0&\text{ if } x \in V\,.\\
		1 + \lambda_{VW} &\text{ if } x \in W\text{ and } W \in B(T)\setminus V\,.\\
		1&\text{ otherwise.}
		\end{cases}
		\end{equation*}
		Conditions (1) and (2) then follow by construction.
		
		Now we prove (3) and (4). First we make the following crucial observation. Thanks to our assumption on $\Phi$ and the conditions on the $\lambda_{VW}$'s, we see that
		\begin{equation}
		\Phi_x\brac{(1+\lambda_{VW})\xi_x} = \Phi_x(\xi_x) + \lambda_{VW}\Phi_x(\xi_x).
		\end{equation}{}
		Therefore,
		\begin{equation*}
		\begin{split}
		\sum_{x \in S} \bar{\eta}_x &= \sum_{x \in S}\bar{\xi}_x -\sum_{x \in V}\bar{\xi}_x + \sum_{W \in B(T)\setminus V}\lambda_{VW}\sum_{x \in W}\bar{\xi}_x\\
		&= \sum_{x \in S}\bar{\xi}_x -\brac{ \bar{\xi}_V - \sum_{W \in B(T)\setminus V}\lambda_{VW}\bar{\xi}_W }\\
		&= \sum_{x \in S} \bar{\xi}_x\,.
		\end{split}
		\end{equation*}
		We see that (3) and (4) follow.
		
		Lastly, \eqref{eq.equiv-dual-cluster} follows from \eqref{eq.cluster-dual} and conditions (1) and (3).
		
	\end{proof}

	Let $S \subset \Rn$ and let $\tree$ be a clustering of $S$. For any subset $S' \subset S$, $\tree$ determines a clustering $\tree'$ of $S'$ by restriction.

	The main result of the section is the following.

	\begin{lemma}\label{lem.red-2}
		Let $ k \geq 2 $. Under the hypotheses of Lemma \ref{lem.red-1}, if $ \abs{S} \leq k $, then there exists $ S' \subset S $ satisfying the following.
		\begin{enumerate}
			\item Let $ \tree' $ be the clustering of $ S' $ determined by $ \tree $. For every trunk $ T' $ of $ \tree' $, let $ \Xi(T') $ denote the linear span of $ \set{\bar{\xi}_V : V \in B(T')} $ in $ \PD^* \oplus \R $. Then we have
			\begin{equation*}
			\#\brac{B(T')} \leq \dim \Xi(T').
			\end{equation*}
			\item There exists $ \eta \in W^m(S)^* $ such that the following hold.
			\begin{enumerate}
				\item $ \eta_x $ is a multiple of $ \xi_x $ for each $ x \in S $, and $ \eta_x = 0 $ for $ x \in S \setminus S' $.
				\item $ \norm{\eta}_{W^m(S)^*} \leq C \norm{\xi}_{W^m(S)^*} $, where $ C = C(m,n,k,B) $ with $B$ being an upper bound for $\diam(S)$.
				\item $ \sum_{x \in S}\xi_x  = \sum_{x \in S}\eta_x $.
				\item $ \sum_{x \in S}\Phi_x(\xi_x) = \sum_{x \in S}\Phi_x(\eta_x) $, with $ \Phi_x $ as in Lemma \ref{lem.red-1}.
			\end{enumerate}
		\end{enumerate}
	\end{lemma}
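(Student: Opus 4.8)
The plan is to iterate Lemma \ref{lem.red-1} until its hypothesis fails, then verify that the surviving set $S'$ has the required cardinality control and that the accumulated functional $\eta$ still enjoys the cluster-dual (hence Whitney-dual) estimate. More precisely, I would set $S^{(0)} := S$, $\xi^{(0)} := \xi$, and the clustering $\tree^{(0)} := \tree$. At stage $i$, if there is \emph{some} trunk $T$ of $\tree^{(i)}$ with $\dim \Xi(T) < \#(B(T))$, apply Lemma \ref{lem.red-1} to obtain $\eta^{(i+1)} := \eta$ from $\xi^{(i)}$; by conclusion (2) of that lemma there is a branch node $V \in B(T)$ all of whose points $x$ now have $\eta^{(i+1)}_x \equiv 0$, so we may pass to $S^{(i+1)} := \{x \in S^{(i)} : \eta^{(i+1)}_x \neq 0\}$, which is strictly smaller, with $\tree^{(i+1)}$ the clustering of $S^{(i+1)}$ induced by restriction. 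Since $|S| \leq k$, this process terminates after at most $k$ steps at some $S' := S^{(N)}$, for which no trunk $T'$ of $\tree'$ satisfies $\dim \Xi(T') < \#(B(T'))$; that is exactly conclusion (1). Along the way, conclusions (1)--(4) of Lemma \ref{lem.red-1} compose: each $\eta^{(i)}_x$ is a scalar multiple of $\eta^{(i-1)}_x$, hence of $\xi_x$ (giving (2a), together with $\eta_x = 0$ off $S'$), and the two conservation identities $\sum_x \eta^{(i)}_x = \sum_x \xi_x$ in $\PD^*$ and $\sum_x \Phi_x(\eta^{(i)}_x) = \sum_x \Phi_x(\xi_x)$ are preserved at each step, yielding (2c) and (2d).

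It remains to establish the norm bound (2b). Here I would first pass to the cluster dual norm via Lemma \ref{lem.clust-equiv}: $\|\eta\|_{W^m(S)^*}$ is comparable (with constant depending on $c_{\underline{x}}, m, n, |S|, B$, and hence ultimately on $m,n,k,B$ by Lemma \ref{lem.clustering-exist}) to $\|\eta\|_{\clu^*}$, computed by formula \eqref{eq.cluster-dual}. Each application of Lemma \ref{lem.red-1} multiplies the relevant $\xi_V$'s by factors $\theta_V \in [0,2]$; inspecting the cluster-dual formula \eqref{eq.cluster-dual}, whose terms are sums of $|\xi_{U(x)}(\cdots)|$ and $|\xi_S(\cdots)|$ with nonnegative weights, each such rescaling at worst doubles $\|\cdot\|_{\clu^*}$ (this is the content of \eqref{eq.equiv-dual-cluster}, modulo the evident typo there — the bound should read $\|\eta\|_{\clu^*} \leq 2\|\xi\|_{\clu^*}$). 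Composing $N \leq k$ such steps gives $\|\eta\|_{\clu^*} \leq 2^{k}\,\|\xi\|_{\clu^*}$, and then the two-sided cluster/Whitney comparison yields $\|\eta\|_{W^m(S)^*} \leq C(m,n,k,B)\,\|\xi\|_{W^m(S)^*}$, which is (2b). One subtlety I would be careful about: when restricting the clustering to $S^{(i+1)}$, the induced tree $\tree^{(i+1)}$ must still satisfy (T1)--(T4), in particular the strictly-increasing-node-count condition and the clustering inequalities \eqref{eq.clust}; since deleting points only shrinks diameters and the clustering constant only improves under restriction, this is routine, but it does mean one should take a single clustering constant valid for all sets of size $\leq k$ (again furnished by Lemma \ref{lem.clustering-exist}), so that the constant $C$ in (2b) does not degrade across the iteration.

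The main obstacle I anticipate is \emph{bookkeeping the reference points under restriction}: the maps $ref(\cdot)$ and $U(\cdot)$, and hence the cluster-dual formula \eqref{eq.cluster-dual}, are defined relative to a fixed set of reference points $\underline{x}$ for $\tree$, and one must check that the restricted tree $\tree'$ inherits a compatible set of reference points so that the rescaling factors $\theta_V$ interact correctly with the formula at each stage. The cleanest route is to fix $\underline{x}$ once and for all at the outset, observe that the nodes $U(x)$ and points $ref(x)$ appearing in \eqref{eq.cluster-dual} for $S^{(i+1)}$ are among those for $S^{(i)}$ (after collapsing deleted singletons), and then the monotonicity of \eqref{eq.cluster-dual} under the $\theta_V$-rescalings goes through verbatim. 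Everything else — termination, composition of the algebraic identities, and the geometric-series norm bound — is mechanical once this alignment is in place.
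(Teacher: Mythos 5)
Your proposal is correct and follows essentially the same route as the paper: iterate Lemma \ref{lem.red-1} to zero out branch nodes until no trunk violates the dimension count, compose the scalar-multiple and conservation properties across steps, and obtain (2b) from Lemma \ref{lem.clustering-exist}, Lemma \ref{lem.clust-equiv}, and \eqref{eq.equiv-dual-cluster} (whose right-hand side should indeed read $2\norm{\xi}_{\clu^*}$, as you note). Your added care about restricting the clustering and accumulating at most a factor $2^{k}$ simply makes explicit what the paper's terse proof leaves implicit.
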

	
	
	\begin{proof}
		Suppose $ S $ itself does not satisfy both of the conclusions. Taking $ \eta = \xi $, we see that $ S $ satisfies (2). Therefore, $ S $ does not satisfy (1). Using condition (2) of Lemma \ref{lem.red-1}, we may shrink $ S $ by one point at a time until conclusions (1), (2a), (2c), and (2d) hold. Meanwhile, (2b) holds, thanks to Lemma \ref{lem.clustering-exist}, Lemma \ref{lem.clust-equiv}, and \eqref{eq.equiv-dual-cluster}.
	\end{proof}

	We will couple Lemma \ref{lem.red-2} with the following result.
	
	\begin{lemma}[Lemma 6.4 of \cite{BM07}]\label{lem.2N-1}
		Let $ S \subset \Rn $ be finite with $ \#(S) \geq 2 $. Let $ \tree $ be a clustering of $ S $. Suppose that for every trunk $ T $ of $ \tree $, $ \abs{\brac{B(T)} }\leq N $ for some $ N \in \N $. Then $ \abs{S} \leq 2^{N-1} $.
	\end{lemma}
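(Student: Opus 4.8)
The plan is to establish a localized statement by induction on subtree height and then read off the lemma. For an internal node $V$ of $\tree$ (one with $\abs V\ge 2$), let $\tree_V$ denote the subtree of $\tree$ rooted at $V$, call a path from $V$ to level $height(\tree_V)-1$ a trunk of $\tree_V$, and let $b(V)$ be the maximum of $\abs{B(T_V)}$ over all trunks $T_V$ of $\tree_V$, where $B(T_V)$ is the set of nodes of $\tree_V$ adjacent to $T_V$. I will show that $\abs V\le 2^{\,b(V)-1}$ for every internal node $V$; applying this with $V=S$ and using the hypothesis $b(S)\le N$ then gives $\abs S\le 2^{N-1}$.

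The first ingredient is a structural remark about clusterings: every leaf lies at the top level $h:=height(\tree)$. Indeed, by \ref{tree-3} the nodes of each level partition $S$, and a leaf has no children, so a leaf at some level $\ell<h$ would leave its single point uncovered at level $\ell+1$, a contradiction. Consequently, if $V$ is internal and sits at level $\ell$, then either $\ell=h-1$ and all children of $V$ are leaves, or $\ell<h-1$ and every child $W$ of $V$ is again internal with $height(\tree_W)=height(\tree_V)-1$. Thus all children of an internal node have subtrees of equal height, which is exactly what lets the induction close.

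The second ingredient is a recursion for $b$. If $height(\tree_V)=1$, the only trunk of $\tree_V$ is the single node $V$, whose adjacent nodes in $\tree_V$ are the $\abs V$ children of $V$ (all leaves), so $b(V)=\abs V$. If $height(\tree_V)\ge 2$, list the children of $V$ as $W_1,\dots,W_r$ (here $r\ge 1$, with $r\ge 2$ when $V=S$ by \ref{tree-4}; by the structural remark every $W_i$ is internal). Each trunk of $\tree_V$ begins $V\to W_i$ for some $i$ and continues as a trunk of $\tree_{W_i}$; its adjacent nodes are the $r-1$ siblings of $W_i$ together with the nodes of $\tree_{W_i}$ adjacent to that sub-trunk, and these two families are disjoint. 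Maximizing over $i$ and over sub-trunks gives $b(V)=(r-1)+\max_i b(W_i)$.

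Finally, I induct on $height(\tree_V)$. The base case $height(\tree_V)=1$ is $\abs V=b(V)\le 2^{\,b(V)-1}$, which holds because $t\le 2^{\,t-1}$ for every integer $t\ge 1$. For the inductive step set $\beta:=\max_i b(W_i)$; the inductive hypothesis gives $\abs{W_i}\le 2^{\,b(W_i)-1}\le 2^{\,\beta-1}$ for each $i$, hence
\[
\abs V=\sum_{i=1}^{r}\abs{W_i}\le r\cdot 2^{\,\beta-1}\le 2^{\,r-1}\cdot 2^{\,\beta-1}=2^{\,(r-1)+\beta-1}=2^{\,b(V)-1},
\]
using $r\le 2^{\,r-1}$ once more. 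The steps that need the most care are the structural fact that all leaves occupy one level (without it a shallow but wide child could wreck the estimate) and the disjointness and exact-count bookkeeping in the recursion; the elementary inequality $t\le 2^{\,t-1}$ is the arithmetic mechanism producing the exponent $N-1$.
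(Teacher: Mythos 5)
Your argument is correct, and it fills a genuine gap in exposition rather than duplicating anything in the text: the paper does not prove this statement at all, but simply cites it as Lemma 6.4 of \cite{BM07}. Your localized induction is the natural (and, as far as I can tell, essentially the classical) route: the key structural observation that \ref{tree-2}--\ref{tree-3} force every leaf to sit at the last level (so all children of an internal node $V$ carry subtrees of height exactly $height(V)-1$, and trunks of the subtree rooted at a child glue with the edge from $V$ to give trunks of the subtree rooted at $V$) is exactly what makes the recursion $b(V)=(r-1)+\max_i b(W_i)$ legitimate, and the bookkeeping there is right: the off-trunk branch nodes split disjointly into the $r-1$ siblings of the chosen child and the branch nodes of the sub-trunk, with the parent of $V$ excluded because it is not in the subtree. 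The base case $b(V)=\abs{V}$ and the step $\abs{V}=\sum_i\abs{W_i}\le r\,2^{\beta-1}\le 2^{(r-1)+\beta-1}$ via $t\le 2^{t-1}$ then close the induction, and your reading of $B(T)$ as the neighbors of the trunk not lying on it is the intended one: it is what makes the bound tight (a balanced binary tree of height $N-1$ has $\abs{B(T)}=N$ and $\abs{S}=2^{N-1}$) and what matches the application in Lemma \ref{lem.red-2}, where $N=\dim\vec{\P}+1$ yields the finiteness constant $2^{\dim\vec{\P}}$. One small point worth keeping, which you already handle implicitly: the axioms allow a non-root internal node to have a single child (a singleton repeated down the tree), so $r=1$ can occur away from the root, and your inequality chain does not require $r\ge 2$ there.
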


	\section{Proof of the main theorem}\label{sec:main proof}
	
	
	We begin the proof of Theorem \ref{thm:main theorem} by showing that one can approximate the convex sets $\Gamma$ arbitrarily well by polytopes, which will allow us to use linear programming. While finer levels of approximation to these convex sets will generally require an arbitrarily increasing number of linear constraints to describe, the constants arising in our proof will be independent of this number.
	
	By a polytope in a finite-dimensional normed vector space $V$, we mean the finite intersection of half-spaces of the form $\{v\in V:\xi(v)\le c\}$, where $\xi\in V^*$ and $c\in\R$.
	
	Let $ v,w $ be two Euclidean vectors. We write $ v \geq w $ if each of the entries of $ v - w $ is nonnegative.
	
	\begin{lemma}\label{lemma:approx convex set}
		Let $V$ be a finite-dimensional normed vector space with norm $\|\cdot\|_V$, and let $K \subset V$ be convex. Given $\delta > 0$, there exists a convex polytope $K_\delta$ such that $K\subset K_\delta\subset B_\delta(K)$, where $B_\delta(K)$ is the $\delta$-neighborhood of $K$ under the metric determined by $\|\cdot\|_V$.
	\end{lemma}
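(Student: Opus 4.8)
The plan is to realize $K_\delta$ as the intersection of finitely many supporting half-spaces of $K$, chosen according to a sufficiently fine net of directions. Before choosing the net I would make a few harmless reductions. If $K=\void$ we may take $K_\delta=\void$ (an empty intersection of two disjoint half-spaces), so assume $K\neq\void$. Since $\dist(v,K)=\dist(v,\overline K)$ for every $v$ we have $B_\delta(K)=B_\delta(\overline K)$, and $K\subset\overline K$; so replacing $K$ by $\overline K$ we may assume $K$ is closed. We also assume $K$ is bounded (the situation arising in our applications), and some such hypothesis is in fact necessary, as explained in the last paragraph. Finally, since all norms on the finite-dimensional space $V$ are equivalent, we may fix an auxiliary inner product $\langle\cdot,\cdot\rangle$ on $V$ with Euclidean norm $\abs{\cdot}$, and (after shrinking $\delta$ to account for the equivalence constant) work throughout with $\abs{\cdot}$ in place of $\norm{\cdot}_V$; the notion of ``polytope'' is unaffected by this change.

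With these reductions in force, let $R:=\max_{x\in K}\abs{x}<\infty$ and let $h(u):=\max_{x\in K}\langle u,x\rangle$ be the support function of $K$; it is finite and satisfies $\abs{h(u)-h(u')}\le R\,\abs{u-u'}$. For a unit vector $u$ the half-space $H_u:=\set{v\in V:\langle u,v\rangle\le h(u)}$ contains $K$. The unit sphere $\Sigma:=\set{u\in V:\abs{u}=1}$ is compact, so given any $\eps>0$ it has a finite $\eps$-net $u_1,\dots,u_N$, and I would simply set
\[
K_\delta:=\bigcap_{i=1}^{N}H_{u_i},
\]
a convex polytope with $K\subset K_\delta$. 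Everything now comes down to choosing $\eps=\eps(\delta,R)>0$ small enough that $K_\delta\subset B_\delta(K)$.

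I would establish this in two steps. First, assuming only $\eps\le\tfrac12$: if $v\in K_\delta\setminus\set0$ and $u_i$ is chosen with $\abs{v/\abs{v}-u_i}\le\eps$, then $\abs{v}=\langle v/\abs{v},v\rangle\le\langle u_i,v\rangle+\eps\abs{v}\le h(u_i)+\eps\abs{v}\le R+\eps\abs{v}$, so $\abs{v}\le 2R$; thus $K_\delta\subset\set{v:\abs{v}\le 2R}$. Second, suppose $v\in K_\delta$ but $d:=\dist(v,K)\ge\delta$, and let $q$ be the nearest point of $K$ to $v$ and $u:=(v-q)/d$ (so $\abs{u}=1$). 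The variational characterization of the projection onto a closed convex set gives $\langle v-q,x-q\rangle\le 0$, i.e.\ $\langle u,x-q\rangle\le 0$, for all $x\in K$; hence $h(u)=\langle u,q\rangle$ and $\langle u,v\rangle=h(u)+d$. Choosing $u_i$ with $\abs{u-u_i}\le\eps$ and using, in turn, $v\in H_{u_i}$, Cauchy--Schwarz together with $\abs{v}\le 2R$, the identity $\langle u,v\rangle=h(u)+d$, and the Lipschitz bound for $h$, we get
\[
h(u_i)\ \ge\ \langle u_i,v\rangle\ \ge\ \langle u,v\rangle-2R\eps\ =\ h(u)+d-2R\eps\ \ge\ h(u_i)+d-3R\eps,
\]
so $d\le 3R\eps$. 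If $\eps$ is chosen with $\eps\le\tfrac12$ and $3R\eps<\delta$ (any $\eps\le\tfrac12$ works when $R=0$), this contradicts $d\ge\delta$. Therefore every $v\in K_\delta$ has $\dist(v,K)<\delta$, i.e.\ $K\subset K_\delta\subset B_\delta(K)$.

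The main obstacle here is not a single computation but the order of the argument: the distance estimate in the second step relies on the a priori bound $\abs{v}\le 2R$ from the first, so the boundedness of $K_\delta$ must be secured before the distance estimate can close. This is also why boundedness of $K$ is essential: if $K$ is unbounded with a recession cone that is not polyhedral (for instance a Lorentz cone), then any polytope containing $K$ also contains a ray along which the distance to $K$ tends to infinity, so no polytope can lie between $K$ and $B_\delta(K)$. Granting boundedness, the remaining ingredients are entirely routine — compactness of $\Sigma$ for the finite net, and Cauchy--Schwarz plus the Lipschitz property of the support function for the inequalities.
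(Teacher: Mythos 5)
Your proof is correct, but it takes a genuinely different route from the paper's. The paper argues first in $(\R^d,\ell^\infty)$: it takes the union $K'$ of all axis-parallel $\delta$-grid cubes meeting $K$, sets $K''=\mathrm{Conv}(K')$, verifies $K\subset K''\subset B_\delta(K)$ by a convexity argument along segments, and then transfers the result to a general norm via a linear isomorphism and norm equivalence — an approximation "from inside out" by the hull of grid cubes. You instead build an outer approximation: after passing to an auxiliary Euclidean structure, you intersect the supporting half-spaces $H_{u_i}=\{\langle u_i,\cdot\rangle\le h(u_i)\}$ over a finite $\eps$-net of the unit sphere, and control $\dist(v,K)\le 3R\eps$ using the $R$-Lipschitz bound for the support function and the variational inequality for the nearest-point projection; you correctly secure the a priori bound $\abs{v}\le 2R$ before running the distance estimate, so the argument closes. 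The one divergence from the statement is that you add the hypothesis that $K$ is bounded, and your justification that some such hypothesis is needed is sound: for a closed convex $K$ whose recession cone is not polyhedral, any polyhedron $P\supset K$ has a polyhedral recession cone strictly containing that of $K$, hence contains a ray along which the distance to $K$ is unbounded, so $P\not\subset B_\delta(K)$. This is not a defect relative to the paper: the paper's own construction also tacitly requires boundedness, since only then do finitely many grid cubes meet $K$, making $\mathrm{Conv}(K')$ a polytope in the paper's sense (a finite intersection of half-spaces). In the application to the sets $G(x)$ in Theorem \ref{thm:main theorem}, the restriction costs nothing, because one may first intersect each $G(x)$ with a ball of jets of radius comparable to the bound in the hypotheses without changing the relevant selection norms by more than a constant factor. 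In short: same statement (modulo the boundedness caveat), different and equally valid mechanism — grid-cube hulls versus supporting half-spaces from a net of directions — with your version making explicit a hypothesis the paper leaves implicit.
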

	
	\begin{proof}
		We first address the case where $V=\R^d$, where the norm is the $\ell^\infty$ norm given by $\|(x_1,...,x_d)\|=\max_{1\le j\le d}|x_j|$.
		
		Let $\mathcal{Q}$ be the set of cubes of the form $$Q=[k_1\delta,(k_1+1)\delta]\times...\times[k_d\delta,(k_d+1)\delta],$$ where $k_1,...,k_d\in\Z$. Define
		\begin{equation*}
		K'=\bigcup_{\substack{Q\in\mathcal{Q}\\Q\cap K\neq\emptyset}} Q,
		\end{equation*}
		and let $K''=Conv(K')$, where $Conv(\cdot)$ is used to denote the convex hull of a set. Thus, $K''$ is a convex polytope. By definition, $K\subset K'$.
		
		Let $x\in K''$. Then, there exist $y',z'\in K'$ such that $x$ is on the line segment from $y'$ to $z'$. Since $y',z'\in K'$, there exist $y,z\in K$ such that $\|y-y'\|,\|z-z'\|<\delta$.
		
		Consider the function $f(t)=\|t(y'-y)+(1-t)(z'-z)\|$. Then $f(0),f(1)<\delta$ and $f$ is a convex, nonnegative function, so $f(t)<\delta$ for all $t\in[0,1]$. Pick $t_0\in[0,1]$ such that $x=t_0y'+(1-t_0)z'$. Then, $f(t_0)<\delta$ means that $\|x-[ty+(1-t)y']\|<\delta$. Since $K$ is convex, $ty+(1-t)y'\in K$, so $x$ is within distance $\delta$ of $K$. Thus, $K'\subset B_\delta(K)$, completing the proof in the case $V=\R^d$.
		
		Now suppose that $V$ is an arbitrary $d$-dimensional, normed space. Since any two norms on a finite-dimensional space are equivalent, there exists $M<\infty$ such that $M^{-1}\|v\|_V\le\|T^{-1}(v)\|_{\ell^\infty(\R^d)} \le M\|v\|_V$.
		
		Let $T:\R^d\to V$ be a linear isomorphism and let $\tilde{K}\subset\R^n$ be a polytope satisfying $T^{-1}(K)\subset \tilde{K}\subset B_\epsilon(T^{-1}(K))$, where $\epsilon>0$ is to be determined. It follows that $K\subset T(\tilde{K})\subset B_{M\epsilon}(K)$ and that $T(\tilde{K})$ is a polytope in $V$. (To see the latter, observe that for linear functionals $\xi$ on $\R^d$, $\xi(v)\le c$ if and only if $\xi\circ T^{-1}(T(v))\le c$ and $\xi\circ T^{-1}\in V^*$.)
		
		Thus, choosing $\epsilon=\delta/M$, we see that $T(\tilde{K})$ is the desired polytope.
	\end{proof}

	
	\subsection{Theorem \ref{thm:main theorem} with $ \mathbb{X} = \V $}

	\begin{proof}[Proof of Theorem \ref{thm:main theorem}  with $ \mathbb{X} = \V $]
		Given $E\subset \R^n$ and $K(x)\subset\vec{\P}$ for each $x\in E$, we define
		\begin{equation}
		\|(K(x))_{x\in E}\|_{W^m(E)}:=\inf\{\|(\vec{P}^x)_{x\in E}\|_{W^m(E)}:\vec{P}^x\in K(x)\text{ for all } x\in E\}.
		\label{eq:selection norm}
		\end{equation}
		
		While not strictly a norm, the above notation allows for a concise description of a quantity which is the main focus of the proof.
		
		Our goal is to show there exists $C=C(m,n,D,B)$ such that for any finite $S\subset \R^n$ satisfying $|S|\le B$, there exists $S'\subset S$ with $|S'|\leq2^{\dim\vec{\mathcal{P}}}$ such that
		\begin{equation*}
		C^{-1}\|(G(x))_{x\in S'}\|_{W^m(S')}\leq \|(G(x))_{x\in S}\|_{W^m(S)}\leq C\|(G(x))_{x\in S'}\|_{W^m(S')}.
		\end{equation*}
		
		If so, it follows that
		\begin{equation*}
		\|(G(x))_{x\in S'}\|_{W^m(S')}\leq 1 \text{ for all }S'\subset S \text{ satisfying }|S'|=2^{\dim\vec{\mathcal{P}}}
		\end{equation*}
		implies
		\begin{equation*}
		\|(G(x))_{x\in S}\|_{W^m(S)}\leq C
		\end{equation*}
		for all $S\subset E$ satisfying $|S|=k^\sharp$.
		
		We now make the following reduction: it suffices to prove Theorem \ref{thm:main theorem} in the case that each $G(x)$ is a polytope.
		
		If not, replace each $G(x)$ with $G(x)_\delta$ for sufficiently small $\delta>0$, where $G(x)_\delta$ is the polytope guaranteed by Lemma \ref{lemma:approx convex set}. By taking $\delta>0$ small enough, one may approximate both $\|(G(x))_{x\in S}\|_{W^m(S)}$ and $\|(G(x))_{x\in S'}\|_{W^m(S')}$ within a factor of 2, as these norms are continuous with respect to the relevant metrics.
		
		To this end, we replace each $G(x)$ with $G(x)_\delta$, which will now be denoted $K_x$, as $\delta$ is fixed. For each $x$, we write $K_x=\{\vec{P}:\Omega_x \vec{P}\le \vec{c}_x\}$ for some linear map $\Omega_x:\vec{\P}\to\R^{m_x}$, where $m_x\in\N$. We will occasionally write $\Omega:W^m(S)\to \prod_x\R^{m_x}$ to denote the mapping which sends $(\vec{P}^x)_{x\in S}$ to $(\Omega_x\vec{P}^x)_{x\in S}$.
		
		We begin by writing $\|(K_x)_{x\in S}\|_{W^m(S)}$ as the solution to a linear programming problem:
		\begin{align}\label{eq:approx norms}
		\|(K_x)_{x\in S}\|_{W^m(S)}&=\inf_{(\Omega_x\vec{P}^x\le \vec{c}_x)_{x\in S}} \|(\vec{P}^x)_{x\in S}\|_{W^m(S)}\\
		&=\inf_{(\Omega_x\vec{P}^x\le \vec{c}_x)_{x\in S}} \sup_{\|(\xi_x)_{x\in S}\|_{W^m(S)^*}\le1} (\xi_x)_{x\in S}[(\vec{P}^x)_{x\in S}].
		\end{align}
		
		By Lemma \ref{lemma:approx convex set}, the unit ball in $W^m(S)^*$ may be approximated within a factor of 2 by a polytope, written as $\{(\xi_x)_{x\in S}:L(\xi_x)_{x\in S}\le\one_k$ for some $k\in \N$ and linear map $L:W^m(S)^*\to\R^k$. Thus, we may rewrite \eqref{eq:approx norms} as
		
		\begin{equation}
		\|(K_x)_{x\in S}\|_{W^m(S)}\approx \inf_{(\Omega_x\vec{P}^x\le \vec{c}_x)_{x\in S}} \sup_{L(\xi_x)_{x\in S}\le\one_k} (\xi_x)_{x\in S}[(\vec{P}^x)_{x\in S}]
		\label{eq.4.4}
		\end{equation}
		for some linear map $L:W^m(S)^*\to\R^k$ and some $k\in\N$.
		
		The advantage of this formulation is that it becomes possible to apply the LP Duality Theorem (Lemma \ref{lem.LP-abs} in Appendix) to the supremum above, giving us
		
		\begin{align*}
		\|(K_x)_{x\in S}\|_{W^m(S)}&\approx \inf_{(\Omega_x\vec{P}^x\le \vec{c}_x)_{x\in S}} \inf_{\substack{y\ge0\\L^Ty=(\vec{P}^x)_{x\in S}}} \one_k\cdot y\\
		&=\inf_{\substack{(\Omega_x\vec{P}^x\le c_x)_{x\in S}\\y\ge0\\L^Ty=(\vec{P}^x)_{x\in S}}} \one_k\cdot y\\
		&=\inf_{\substack{-\Omega L^Ty\ge -c\\y\ge0}} \one_k\cdot y.
		\end{align*}
		Note the referenced linear programming problem is feasible, as its solution corresponds to finding the smallest norm of a vector in a closed set.
		
		Applying the Duality Theorem again, one obtains
		
		\begin{align}\label{eq:big finish}
		\|(K_x)_{x\in S}\|_{W^m(S)}&\approx \sup_{\substack{(z_x\ge0)_{x\in S}\\L(-\Omega^T)z\le\one_k}}\sum_x -\vec{c}_x\cdot z_x\\
		&=\sup_{\substack{L(\xi_x)_{x\in S}\le\one_k\\(\xi_x\in(-\Omega_x^T)\R_+^{m_x})_{x\in S}}}\sup_{\substack{(z_x\ge0)_{x\in S}\\(-\Omega_x^Tz_x=\xi_x)_{x\in S}}}\sum_x -\vec{c}_x\cdot z_x\\
		&\approx\sup_{(\xi_x\in(-\Omega_x^T)\R_+^{m_x})_{x\in S}} \frac{\sum_{x\in S}f_x(\xi_x)}{\|(\xi_x)_{x\in S}\|_{W^m(S)^*}},
		\end{align}
		where
		\begin{equation*}
		f_x(\xi_x)=\sup_{\substack{z\ge0\\-\Omega_x^Tz=\xi_x}}\sum_x -\vec{c}_x\cdot z.
		\end{equation*}
		
		
		Fix $(\xi_x)_{x\in S}$ such that $\xi_x\in (-\Omega_x^T)\R_+^{m_x}$ for all $x\in S$. We see that $f$ satisfies the hypotheses of Lemmas \ref{lem.red-1} and \ref{lem.red-2}. By Lemma \ref{lem.2N-1}, we may apply Lemma \ref{lem.red-2} repeatedly until the $S'$ in the conclusion satisfies $|S'|\le 2^{\dim\P}$.

		Let $(\eta_x)_{x\in S}$ be as guaranteed in the conclusion of the lemma and recall $S'=\{x\in S: \eta_x\neq0\}$. Thus,
		\begin{equation*}
		\|(\eta_x)_{x\in S}\|_{W^m(S')^*}=\|(\eta_x)_{x\in S}\|_{W^m(S)^*}\lesssim\|(\xi_x)_{x\in S}\|_{W^m(S)^*}
		\end{equation*}
		and $|S'|\le 2^{\dim\vec{\P}}$. Note that each $\eta_x$ is obtained by multiplying some $\xi_x$ by a nonnegative scalar; thus, $\eta_x\in (-\Omega_x^T)\R_+^{m_x}$ for all $x\in S$.

		By this reasoning and \eqref{eq:big finish} applied both as written above and with $S'$ in place of $S$,
		\begin{align*}
		\|(K_x)_{x\in S}\|_{W^m(S)}&\approx\sup_{(\xi_x\in\Omega_x^T\R_+^{m_x})_{x\in S}} \frac{\sum_{x\in S}f_x(\xi_x)}{\|(\xi_x)_{x\in S}\|_{W^m(S)^*}}\\
		&\lesssim\sup_{(\eta_x\in\Omega_x^T\R_+^{m_x})_{x\in S'}} \frac{\sum_{x\in S}f_x(\eta_x)}{\|(\eta_x)_{x\in S}\|_{W^m(S')^*}}\\
		&\approx\|(K_x)_{x\in S'}\|_{W^m(S')}.
		\end{align*}
		
	\end{proof}

	\subsection{Theorem \ref{thm:main theorem} with $ \mathbb{X} = \hV $}
	
	\newcommand{\hW}{\dot{W}^m(S)}
	\newcommand{\LS}{H(S)}
	
	In this section, we point out the modifications needed in order to prove Theorem \ref{thm:main theorem} for the case $ \mathbb{X} = \hV $.
	
	Let $ S \subset \Rn $ be a finite set. Recall the definition of $ \norm{\cdot}_{\hW} $ in \eqref{eq.W-2}. We define
	\begin{equation*}
	H(S):= \mathrm{span}\,\set{\xi_{\alpha,y,z,j}: y,z \in S, \,z \neq y, \,\abs{\alpha} \leq m-1,\,1 \leq j \leq D} \,,
	\end{equation*}
	where each $\xi_{\alpha,y,z,j} \in W^m(S)^*$ is characterized by the action
	\begin{equation*}
	\xi_{\alpha,y,z,j}[(\vP^x)_{x \in S}] = \frac{\d^\alpha(P^y_j - P^z_j)(y)}{\abs{y-z}^{m-\abs{\alpha}}}\,.
	\end{equation*}
	Then the norm $\norm{\cdot}_{\hW}$ can be computed via the formula
	\begin{equation*}
		\norm{(\vP^x)_{x \in S}}_{\hW}
		= \sup_{\substack{\xi \in \LS\\\norm{\xi}_{W^m(S)^*} \leq 1}}
		\xi [ (\vP^x)_{x \in S} ]\,.
    \end{equation*}

	
	Mirroring \eqref{eq:selection norm}, we define the selection ``seminorm'' to be
		\begin{equation*}
		\norm{(K_x)_{x \in S}}_{\dot{W}^m(S)} := \inf\{\|(\vec{P}^x)_{x\in S}\|_{\dot{W}^m(S)}:\vec{P}^x\in K(x)\text{ for all } x\in S\}.
		\end{equation*}
		We repeat proof of Theorem \ref{thm:main theorem} with $ \mathbb{X} = \V $ in the previous section, but with the following modifications.
		\begin{itemize}
			\item We use $ \norm{\cdot}_{\hW}$ in place of $ \norm{\cdot}_{W^m(S)} $ (both the Whitney seminorm and the selection ``seminorm'').
			\item All the linear functionals will be chosen from $ \LS \subset W^m(S)^* $.
			\item The map $ L $ in \eqref{eq.4.4} will be replaced by a suitable linear map $ \tilde{L}:\LS \to \R^{\tilde{k}} $ for some $ \tilde{k} \in \N $.
		\end{itemize}

    This concludes all the necessary modifications for the proof of Theorem \ref{thm:main theorem} with $ \mathbb{X} = \hV $.
    
    The proof of Theorem \ref{thm:main theorem} is complete.

	\section{Vector-Valued Shape Fields Finiteness Principle}

	In this section we use what is colloquially known as the ``gradient trick'' to prove Theorem \ref{thm:SF FP} using the $D=1$ case proven in \cite{FIL16}. (See \cite{FIL16, FL14}.)
	
	The following proof will require working in both $\R^n$ and $\R^{n+D}$, so we provide a brief introduction to some of the notation.
	
	The variable for $\R^n$ will be $x$, while $\R^{n+D}$ will be viewed as $\{z=(x,\xi):x\in\R^n,\xi\in\R^D\}$. The appropriate level of regularity for $\R^{n+D}$ will be $C^{m+1}$, so let $\P^+$ denote the vector space of $\R$-valued, $m$-degree polynomials over $\R^{n+D}$. (Recall that $\vec{\P}$ is the vector space of $\R^D$-valued, $(m-1)$-degree polynomials over $\R^n$.)
	
	\begin{proof}[Proof of Theorem \ref{thm:SF FP}]
		Let $E, Q_0\subset\R^n, (\vec{\Gamma}(x,M))_{x\in E, M>0}, C_w, 0<\delta_{Q_0}\le\delta_{\max}, x_0\in E\cap 5Q_0$ as in the hypotheses of Theorem \ref{thm:SF FP} be given.
		
		Let $E^+=\{(x,0):x\in E\}\subset\R^{n+D}$. For $(x_0,0)\in E^+$, define
		\begin{equation}
		\Gamma((x_0,0),M)=\{P\in\P^+:P(x,0)=0, \nabla_\xi P(x,0)\in \vec{\Gamma}(x_0,M)\}.
		\end{equation}
		
		We now show that $(\Gamma(z,M))_{z\in E^+}$ satisfies the hypotheses of the $D=1$ case of Theorem \ref{thm:SF FP}.
		
		Let $S^+\subset E^+$ with $|S^+|\le k^\sharp$. By definition, $S^+$ is of the form $\{(x,0):x\in S\}$ for some $S\subset E$ with $|S|\le k^\sharp$.
		
		By hypothesis of Theorem \ref{thm:SF FP}, there exist $(\vec{P}^x)_{x\in S}$ such that
		\begin{equation}
		\|(\vec{P}^x)_{x\in S}\|_{\dot{W}^m(S)}\le M_0.
		\end{equation}
		and
		\begin{equation}
		\vec{P}^x\in \vec{\Gamma}(x,M_0)\text{ for all }x\in S.
		\end{equation}
		
		For $z=(x_0,0)\in E^+$, define
		\begin{equation}
		P^z(x,\xi)=P^{(x_0,0)}(x,\xi):=\sum_{j=1}^D \xi_jP_j(x).
		\end{equation}

		Clearly, $P^{(x_0,0)}(x_0,0)=0$ and $\nabla_\xi P^{(x_0,0)}=\vec{P}^{x_0}$, so $P^z\in \Gamma(z,M_0)$ for all $z\in E^+$.
		
		Let $(x_0,0),(y_0,0)\in E^+$. Then,
		\begin{equation}
		\d_x^\alpha P^{(x_0,0)}(x,0)=0\text{ and }\d_x^\alpha\d^\beta_\xi P^{(x_0,0)}(x,0)=0\text{ for }|\beta|\ge2
		\end{equation}
		by definition, and for $1\le j\le D$,
		\begin{align}
		\left|\d_x^\alpha\d_{\xi_j}\left(P^{(x_0,0)}-P^{(y_0,0)}\right)(x_0,0)\right|&=\left|\d_x^\alpha(P_j^{x_0}-P_j^{y_0})(x_0,0)\right|\\
		&\leq C|x_0-y_0|^{m-|\alpha|}\\
		&=C|(x_0,0)-(y_0,0)|^{(m+1)-(|\alpha|+1)}.
		\end{align}
		
		Thus, $(P^z)_{z\in S^+}$ satisfy \eqref{eq:SF homogeneity condition}.
		
		To demonstrate $(C_w,\delta_{\max})$-convexity, let $0<\delta\le\delta_{\max}, x\in S^+, M<\infty, P_1, P_2, Q_1, Q_2\in \mathcal{P}^+$ be as in Definition \ref{def.shape field}. If $P:=Q_1\odot_{(x_0,0)}Q_1\odot_{(x_0,0)}P_1+Q_2\odot_{(x_0,0)}Q_2\odot_{(x_0,0)}P_2$, then $P(x_0,0)=0$ and
		\begin{equation}
		\nabla_\xi P(x,0)=[Q_1\odot_{(x_0,0)}Q_1\odot_{(x_0,0)}\nabla_\xi P_1](x,0)+[Q_2\odot_{(x_0,0)}Q_2\odot_{(x_0,0)}\nabla_\xi P_2](x,0),
		\end{equation}
		which lies in $\Gamma(x,C_w M)$ by the $(C_w,\delta_{\max})$-convexity of the $\vec{\Gamma}(x,M)$.
		
		Let $Q'$ be the unit cube in $\R^D$. By the $D=1$ case of Theorem \ref{thm:SF FP} applied to $E^+\subset\R^{n+D}, (\Gamma(z,M))_{z\in E+, M>0}, (x_0,0), Q_0\times Q'$, we have the following. There exist $F\in C^{m+1}(\R^{n+D},\R)$ and $P^0\in\Gamma((x_0,0),CM_0)$ such that
		\begin{equation}\label{eq:conc1}
		J_{(x,0)}F\in\Gamma((x,0),CM)\text{ for all }(x,0)\in E^+;
		\end{equation}
		
		\begin{equation}\label{eq:conc2}
		|\d_x^\alpha\d_\xi^\beta(F-P^0)(x,\xi)|\le CM_0\text{ for all }(x,\xi)\in Q_0\times Q', |\alpha|+|\beta|\le m+1;
		\end{equation}
		and
		\begin{equation}\label{eq:conc3}
		\text{In particular, }|\d_x^\alpha\d_\xi^\beta F(x,\xi)|\le CM_0\text{ for }|\alpha|+|\beta|=m+1.
		\end{equation}
		
		Define $\vec{G}(x):=\nabla_\xi F(x,0)$ and $\vec{Q}^0(x)=\nabla_\xi P^0(x,0)$. We claim $\vec{G}\in C^m(\R^n,\R^D)$ and $\vec{Q}^0\in\vec{\Gamma}(x,CM)$ are the desired function and jet, respectively, found in the conclusion of Theorem \ref{thm:SF FP}.
		
		First, by \eqref{eq:conc1},
		\begin{equation}
		J_xG(y)=\nabla_\xi J_{(y,0)}F(x,0)\in \vec{\Gamma}(x,CM)
		\end{equation}
		because $J_{(y,0)}F(x,0)\in\vec{\Gamma}((x,0),CM)$.
		
		Next, for any $|\alpha|\le m$ and $1\le j\le D$,
		\begin{align}
		|\d_x^\alpha(G_j-Q^0_j)(x)|&=|\d_x^\alpha(\d_{\xi_j}F-\d_{\xi_j}P^0)(x,0)|\\
		&\le CM_0\delta_{Q_0}^{(m+1)-(|\alpha|+1)}=CM_0\delta_{Q_0}^{m-|\alpha|}
		\end{align}
		by \eqref{eq:conc2}.
		
		Lastly, for $|\alpha|=m$,
		\begin{equation}
		|\d_x^\alpha G_j(x)|=|\d_x^\alpha \d_{\xi_j}F|\le CM_0
		\end{equation}
		via \eqref{eq:conc3}.
	\end{proof}

	\appendix

	\section{Linear programming and duality}

	\begin{lemma}[LP Duality Theorem]\label{lem.LP}
		Let $ p,q $ be positive integers. Let $ c \in \R^p $ and $ b \in \R^q $. Let $ A : \R^p \to \R^q $ be a linear map. Consider the following two optimization problems.
		\begin{eqnarray}
		&\text{Maximize } c^T\cdot x
		\text{ subject to }
		Ax \leq b\,.
		\label{eq.dual-max}\\
		&\text{Minimize } b^T\cdot y
		\text{ subject to }
		A^Ty = c
		\text{ and }
		y \geq 0\,.
		\label{eq.dual-min}
		\end{eqnarray}

		Suppose one of \eqref{eq.dual-max} or \eqref{eq.dual-min} has a feasible solution, then both have feasible and optimal solutions. Moreover, if $ x_0 $ optimizes \eqref{eq.dual-max} and $ y_0 $ optimizes \eqref{eq.dual-min}, then $ c^T\cdot x_0 = b^T \cdot y_0 $, i.e. the maximum of \eqref{eq.dual-max} equals the minimum of \eqref{eq.dual-min}.
		
		The same conclusion holds if we replace ``$ Ax \leq b $'' by ``$ Ax \leq b $ and $ x \geq 0 $'' in \eqref{eq.dual-max} and ``$ A^Ty = c $'' by ``$ A^Ty \geq c $'' in \eqref{eq.dual-min}.
	\end{lemma}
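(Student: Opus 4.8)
The theorem is classical; the cleanest self-contained route is to derive it from the Farkas Lemma, which rests only on the separating hyperplane theorem in $\R^p$ together with the closedness of finitely generated convex cones. I would proceed in three steps: weak duality; the Farkas Lemma and strong duality/attainment for the first pair \eqref{eq.dual-max}--\eqref{eq.dual-min}; and a reduction of the second pair to the first. \emph{Weak duality} needs no hypothesis: if $x$ is feasible for \eqref{eq.dual-max} and $y$ for \eqref{eq.dual-min}, then, using $y\ge0$ and $Ax\le b$, $c^Tx=(A^Ty)^Tx=y^T(Ax)\le y^Tb=b^Ty$, so the value of \eqref{eq.dual-max} never exceeds that of \eqref{eq.dual-min}. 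In particular, once both problems are feasible both objectives are finite on their feasible sets, and it remains only to produce optimizers with a common value. (In this paper's applications the primal is always a minimization of a norm over a nonempty closed set, hence feasible and bounded below, so the degenerate case of a feasible-but-unbounded primal with infeasible dual does not arise.)

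For \emph{Farkas}: for $A:\R^p\to\R^q$ and $c\in\R^p$, exactly one of (i) there is $y\ge0$ with $A^Ty=c$, or (ii) there is $x$ with $Ax\le0$ and $c^Tx>0$, holds; ``not both'' is the weak-duality computation, and ``at least one'' follows by separating $c$ from the cone $K=\{A^Ty:y\ge0\}$ when $c\notin K$ — here one uses that $K$ is closed, which I would obtain from Carath\'eodory's theorem (every point of $K$ is a nonnegative combination of a linearly independent subset of the columns of $A^T$, and a finite union of cones on linearly independent sets is closed). Homogenizing the right-hand side gives the affine form I actually use: a nonempty polyhedron $\{x:Ax\le b\}$ lies in a halfspace $\{x:c^Tx\le\gamma\}$ iff there is $y\ge0$ with $A^Ty=c$, $b^Ty\le\gamma$, and $\{x:Ax\le b\}$ is empty iff there is $y\ge0$ with $A^Ty=0$, $b^Ty<0$. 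Now assume \eqref{eq.dual-max} and \eqref{eq.dual-min} feasible and let $\gamma^*$ be the (finite) supremum of \eqref{eq.dual-max}. The inclusion $\{Ax\le b\}\subseteq\{c^Tx\le\gamma^*\}$ yields $y_0\ge0$ with $A^Ty_0=c$, $b^Ty_0\le\gamma^*$; weak duality forces $b^Ty_0=\gamma^*$, so $y_0$ is optimal for \eqref{eq.dual-min}. Finally $\{Ax\le b,\ c^Tx\ge\gamma^*\}$ is nonempty: otherwise the emptiness criterion gives $y\ge0$, $\lambda\ge0$ with $A^Ty=\lambda c$, $b^Ty<\lambda\gamma^*$, and $\lambda>0$ contradicts optimality of $y_0$ while $\lambda=0$ drives $b^T(y_0+ty)\to-\infty$ along dual-feasible points, absurd. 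So the primal supremum is attained and both optima equal $\gamma^*$.

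To handle the variant with ``$Ax\le b$ and $x\ge0$'' and ``$A^Ty\ge c$'', apply the first form with the linear map $x\mapsto(Ax,-x)$ in place of $A$ and right-hand side $(b,0)$: the dual variable splits as $(y,s)$ with $y,s\ge0$ and $A^Ty-s=c$, i.e. $A^Ty\ge c$ (take $s=A^Ty-c$), and the dual objective is $b^Ty+0^Ts=b^Ty$, exactly the claimed pair.

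The main obstacle is the closedness of the finitely generated cone $K$ (equivalently, making the separation step of Farkas rigorous): it is the single nontrivial ingredient and the one place where finite-dimensionality is genuinely used. An equally good alternative that sidesteps separation is Fourier--Motzkin elimination, which proves Farkas and the duality in one pass by eliminating the variables of \eqref{eq.dual-max} one at a time.
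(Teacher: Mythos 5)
Your proof is correct, but note that the paper does not actually prove this lemma at all: it simply cites the textbook reference \cite{M07-LP}, so any self-contained argument is by definition a different route. What you supply is the standard Farkas-based derivation — weak duality by the one-line computation $c^Tx=y^TAx\le b^Ty$; the homogeneous Farkas alternative obtained from strict separation of $c$ from the cone $K=\{A^Ty:y\ge0\}$, whose closedness you correctly isolate as the one genuinely nontrivial ingredient and settle via conic Carath\'eodory; the affine forms by homogenization; dual attainment from the inclusion $\{Ax\le b\}\subseteq\{c^Tx\le\gamma^*\}$ and primal attainment from the emptiness criterion applied to $\{Ax\le b,\ c^Tx\ge\gamma^*\}$, with the two cases $\lambda>0$ and $\lambda=0$ handled correctly; and the second (nonnegativity-constrained) form by the slack-variable reduction $x\mapsto(Ax,-x)$ with right-hand side $(b,0)$. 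This buys a fully self-contained appendix at the cost of about a page, whereas the paper buys brevity at the cost of an external dependency; the Fourier--Motzkin alternative you mention would be an equally legitimate elementary substitute for the separation step.

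One point worth making explicit: as literally printed, the hypothesis ``one of \eqref{eq.dual-max} or \eqref{eq.dual-min} has a feasible solution'' is not enough for the conclusion (a feasible but unbounded primal, e.g.\ maximize $x$ subject to $0\cdot x\le 1$, has an infeasible dual), and your argument, correctly, only establishes the statement under the hypothesis that both problems are feasible (equivalently, one is feasible with finite optimal value). You flag this and observe that in the paper's application (Lemma \ref{lem.LP-abs}, where the relevant problem is minimization of a norm over a nonempty closed set) the degenerate case does not occur; that is the right reading, but if your proof is to stand in for the cited result you should either state the lemma with the ``both feasible'' (or boundedness) hypothesis or adopt the convention that an infeasible minimization problem has value $+\infty$, under which the equality of values still holds in the degenerate case even though attainment fails.
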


	See \cite{M07-LP} for a proof.

	\newcommand{\pb}[1]{\langle#1\rangle}
	\newcommand{\euc}{\mathbf{E}}

	We generalize the theorem above to finite dimensional normed spaces.

	\begin{lemma}\label{lem.LP-abs}
		Let $ V $ be a finite-dimensional normed vector space with norm $ \norm{\cdot}_V $ and dual $ V^* $. Let $ L : V^* \to \R^q $ be a linear map and let $ L^*: \R^q \to V $ be the dual operator of $ L $ defined by\footnote{Here we identify the dual of any Euclidean space with itself via the dot product.}
		\begin{equation*}
		x^T\cdot L(\phi) = \pb{\phi,L^* x}
		\text{ for all } x \in \R^q \text{ and } \phi \in V^*.
		\end{equation*}
		Let $ b \in \R^q $. Suppose there exists $ \phi_0 \in V^* $ such that $ L(\phi_0) \leq b $. Then
		\begin{equation}
		\sup_{L(\phi) \leq b} \pb{\phi,v} = \inf_{\substack{y \geq 0\\L^*y = v}} b^T\cdot y.
		\label{eq.lem-LP}
		\end{equation}
	\end{lemma}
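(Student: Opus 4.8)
The plan is to reduce the statement to the classical finite-dimensional LP Duality Theorem (Lemma \ref{lem.LP}) by choosing coordinates. First I would fix a linear isomorphism $T : \R^p \to V$, where $p = \dim V = \dim V^*$, and let $T^* : V^* \to \R^p$ be the induced isomorphism on duals, so that $\pb{\phi, Tx} = (T^*\phi)^T \cdot x$ for all $x \in \R^p$ and $\phi \in V^*$. Under this identification, $L$ becomes a linear map $\widehat{L} := L \circ (T^*)^{-1} : \R^p \to \R^q$, and the pairing $\pb{\phi, v}$ with a fixed $v = Tw \in V$ becomes the dot product $(T^*\phi)^T \cdot w = c^T \cdot (T^*\phi)$ where $c := w \in \R^p$. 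Writing $\psi := T^*\phi \in \R^p$ as the new variable, the left-hand side of \eqref{eq.lem-LP} is exactly
\begin{equation*}
\sup_{\widehat{L}\psi \leq b} c^T \cdot \psi,
\end{equation*}
which is the maximization problem \eqref{eq.dual-max} with data $A = \widehat{L}$, $c = w$, $b = b$. The hypothesis that some $\phi_0 \in V^*$ satisfies $L(\phi_0) \leq b$ says precisely that $\psi_0 := T^*\phi_0$ is a feasible point for this LP, so Lemma \ref{lem.LP} applies and the supremum equals the minimum of the dual problem \eqref{eq.dual-min}, namely $\min\{ b^T \cdot y : \widehat{L}^T y = c,\ y \geq 0 \}$.

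It then remains to check that this dual problem matches the right-hand side of \eqref{eq.lem-LP} under the identifications made, i.e. that the constraint $\widehat{L}^T y = c$ is the same as $L^* y = v$. Here I would just unwind the definitions: for $x \in \R^q$ and $\phi \in V^*$, writing $\psi = T^*\phi$, we have $x^T \cdot \widehat{L}\psi = x^T \cdot L(\phi) = \pb{\phi, L^* x} = (T^*\phi)^T \cdot T^{-1}(L^* x) = \psi^T \cdot (T^{-1} L^* x)$, so $\widehat{L}^T x = T^{-1}(L^* x)$. Hence $\widehat{L}^T y = c = w = T^{-1}v$ if and only if $L^* y = v$. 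Since the condition $y \geq 0$ and the objective $b^T \cdot y$ are unchanged, the two optimization problems coincide, completing the reduction.

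I do not expect a serious obstacle here; the only thing to be careful about is bookkeeping with the several identifications (the norm on $V$ plays no role beyond guaranteeing $V$ is finite-dimensional, and the dual pairing conventions must be kept consistent with the footnote identifying a Euclidean space with its own dual). One small point worth stating explicitly is that the choice of $T$ is immaterial: both sides of \eqref{eq.lem-LP} are defined intrinsically in terms of $L$, $L^*$, $b$, and $v$, so once the identity holds for one choice of coordinates it holds as stated. I would therefore present the argument as: choose coordinates, identify \eqref{eq.lem-LP} with the primal/dual pair of Lemma \ref{lem.LP}, invoke feasibility from the hypothesis, and translate the dual constraint back.
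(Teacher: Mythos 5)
Your proposal is correct and follows essentially the same route as the paper: choose a linear isomorphism between $V$ and $\R^p$ (the paper uses $J:V\to\R^p$ and its dual $J^*$, you use $T:\R^p\to V$ and $T^*$), rewrite the supremum as the primal LP of Lemma \ref{lem.LP}, invoke feasibility and duality, and identify the transposed constraint with $L^*y=v$ exactly as in the paper's computation $(L\circ J^*)^T=J\circ L^*$. No substantive difference.
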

	
	\begin{proof}
		Let $ p = \dim V < \infty $. There exists a linear isomorphism $ J: V \to \R^p $. Let $ J^* : \R^p \to V^* $ denote its dual. Note that $ J^* $ is also a linear isomorphism. We have the following diagram.
		\begin{equation*}
		\begin{tikzcd}
		\R^p \arrow{d} & V \arrow[l,"J"] & \R^q \arrow[l,"L^*"]\arrow{d} \\
		\R^p \arrow[r,"J^*"] & V^*\arrow[r,"L"] & \R^q
		\end{tikzcd}
		\end{equation*}
		For each $ v \in V $ and $ \phi \in V^* $, there exist unique $ c,x\in \R^p $ such that $ J^{-1}(p) = v $ and $ J^*(x) = \phi $. Thus, thanks to LP Duality Theorem (see Appendix), we have
		\begin{equation*}
		\begin{split}
		\sup_{L(\phi)\leq b}\pb{\phi,v} &= \sup_{L\circ J^*(x) \leq b}\pb{J^*(x), J^{-1}(c)} \\
		&= \sup_{L\circ J^*(x)\leq b}c^T\cdot x\\
		&= \inf_{\substack{(L\circ J^*)^Ty = c\\y\geq 0}}b^T \cdot y.
		\end{split}
		\end{equation*}
		Notice that $ (L\circ J^*)^T = J\circ L^* $. Moreover, since $ J $ is an isomorphism, the equality $ J \circ L^* y = c $ is equivalent to $ L^*y = J^{-1}(c) = v $. \eqref{eq.lem-LP} follows.
	\end{proof}

\bibliography{Whitney-bib}{}
\bibliographystyle{plain}

\end{document}